\documentclass[11pt]{article}
\usepackage{graphicx} 
\usepackage{amsmath,amsthm}
\usepackage{amssymb}
\usepackage{todonotes}
\usepackage{bbm}
\usepackage{enumitem, hyperref}
\usepackage{dsfont}
\usepackage{tikz}
\usepackage{comment}
\usetikzlibrary{automata,positioning}

\newcommand*{\N}{\mathbb{N}}
\newcommand*{\Z}{\mathbb{Z}}

\newcommand*{\R}{\mathbb{R}}

\newcommand*{\e}{\mathrm{e}}

\newcommand*{\Lcal}{\mathcal{L}}
\newcommand*{\Zcal}{\mathcal{Z}}
\newcommand*{\E}{\mathbb{E}}
\newcommand*{\Prob}{\mathbb{P}}

\newcommand*{\GWT}{\mathbb{T}}

\definecolor{caribbeangreen}{rgb}{0.0, 0.8, 0.6}

\newtheorem{theorem}{Theorem}
\newtheorem{lemma}{Lemma}
\newtheorem{example}{Example}

\newtheorem{prop}{Proposition}

\usepackage[left=1in, right=1in, bottom = 1in, top = 1in]{geometry}
\title{Maximal and minimal displacement of supercritical branching random walks on free products of groups}
\author{Robin Kaiser, Martin Klötzer, Konrad Kolesko, Ecaterina Sava-Huss}
\date{\today}

\begin{document}
\maketitle

\begin{abstract}
We prove that the maximal and minimal displacement of branching random walks with mean offspring number $\rho>1$ on free products of finite groups grows linearly almost surely. More precisely, we establish that the linear speed for the maximal (respectively minimal) displacement is given by the largest (respectively smallest)  intersection point of the large deviation rate function of the underlying random walk with the horizontal line at height $\log\rho$. The proof is based on constructing an associated multitype branching process  which consists of particles that travel fast enough, and distinguishing the types via the suffix of the particles locations.
\end{abstract}
\textbf{2020 Mathematics Subject Classification.} 60J80, 60F05, 60F15.\\
\textbf{Keywords.} branching process, random walk, large deviations, rate function, free products, maximal/minimal displacement.

\section{Introduction}

Classical random walks in linear time can be generalized to random walks indexed by trees \cite{tree-index} by attaching i.i.d.random variables to each edge of the tree index set and evaluating the random walk by summing over paths in the tree index set. Random walks with Galton-Watson trees as their index sets are also called branching random walks, based on the random branching of particles in each time step. 

In this article we consider branching random walks on free products of finite groups, such that the underlying branching process is supercritical, that is, in expectation every particle has more than one offspring. Supercritical branching random walks exhibit a wide array of interesting phenomena, ranging from the limiting behaviour of associated martingales, such as the additive martingale \cite{addmart1,addmart2} or the derivative martingale \cite{derivmart}, over limit theorems for the behaviour of the traveling particles \cite{stam,limit1,limit2}, to convergence of the ensemble of particles to the boundary of the underlying graph \cite{boundaryconv}. This is not a complete list of questions, since a full account of the abundant research on supercritical branching random walks exceeds the scope of this introduction.
One aspect of particular interest analyzed in the current paper is the farthest distance particles in a supercritical branching random walk can travel, a quantity known as the \textit{maximal displacement}. This topic has been studied extensively on the real line \cite{maxdisreal1,maxdisreal2}, where, under suitable conditions, the maximal displacement is shown to grow linearly in time.  On $\R$, much more is known: the second-order asymptotics of the maximal displacement are logarithmic \cite{logcorr1,logcorr2}, and after rescaling, the trajectory of the fastest particle converges to a Brownian excursion \cite{maxbrown}. Furthermore, the branching random walk on $\R$ converges in law when viewed from its leftmost point \cite{maxconv}.

When looking at branching random walks on spaces other than $\mathbb{R}$, much less is known; we refer to \cite{BRW} and the references cited there for an overview of results on branching random walks. Recently, the maximal and minimal displacement of branching random walks on Galton-Watson trees has been investigated in \cite{maxdisgalton}. This work together with \cite{limit2} is the main motivation for the current article, where as underlying state space for the expanding cloud of particles, we consider \textit{free products of finite groups} (and their Cayley graphs). These structures possess a rich and intricate geometry at infinity, and random walks on them are well understood; see \cite{RWOnFreeProducts,LLNRWFreeProduct,RateOfEscapeFreeProduct,AssymptoticEntropyFreeProduct,FiniteRangeFreeProduct} for a selection of results in this direction. Concerning branching random walks on free products of finite groups, much less is known. In \cite{BRWOnFreeProducts}, the Hausdorff dimension of the limiting set of a branching random walk on free product of groups is investigated; the results have been generalized to hyperbolic groups in \cite{MR4630601}. Recently, in \cite{Multifractal} the authors extended the study of the properties of the limit sets, and investigated the multifractal spectrum of branching random walks on free groups.

To present the main results, we first briefly outline the framework and the standing assumptions on the branching random walks.
Let $r \in \mathbb{N}$ and let $G_1, \ldots, G_r$ be finite groups. Set $G = G_1 * \cdots * G_r$ to be their free product. We consider a probability measure $\mu$ on $G$ given as a convex combination of probability measures supported on the factors $G_1, \ldots, G_r$. That is, there exist probability measures $\mu_i$ on $G_i$ for each $i \in \{1, \ldots, r\}$ and coefficients $\alpha_1, \ldots, \alpha_r \in [0,1]$ with $\sum_{i=1}^r \alpha_i = 1$ such that
\begin{equation}\label{def: mu}
\mu=\sum_{i=1}^r \alpha_i \mu_i.
\end{equation}
Let $\pi$ be a probability distribution on $\mathbb{N}$ with mean $\mathbb{E}[\pi] = \rho$. We refer to $\pi$ as the offspring distribution, and denote by $\GWT$ the Galton–Watson tree generated from $\pi$. Let $(Y_n)_{n \in \mathbb{N}}$ be the random walk on $G$ with step distribution $\mu$, and let $(X_v)_{v\in\GWT}$ be the random walk indexed by $\GWT$ whose steps, conditionally on $\GWT$, are independent and distributed according to $\mu$. Thus, $(X_v)_{v \in \GWT}$ is the branching random walk (BRW) on $G$: the branching mechanism follows $\pi$, and particles move independently on $G$ with law $\mu$. The following standing assumptions on the branching process and the random walk will be in force for the rest of the paper.

\begin{enumerate}[label = (A\arabic*)]
\setlength\itemsep{0em} 
\item\label{A1} The Galton-Watson tree $\GWT$ is supercritical and has finite first moment: for $\rho = \sum_{k\in\N}k\pi(k)$, it holds $1 < \rho < \infty$.
\item Every particle has at least one offspring, that is
$\pi(0)=0$.\label{A2}
\item\label{A3} The step distribution $\mu$ of the random walk $(Y_n)_{n\in\N}$ defined in (\ref{def: mu}) is non-degenerate in the sense that every group is visited with positive probability, i.e.
$$\alpha_k = \mu(G_k) >0 \text{ for every } k\in \lbrace1,2,\dots,r\rbrace$$
and the probability measures $\mu_i$ are irreducible.

We also assume that, if $r=2$, at least one of the two groups has cardinality greater than $2$, otherwise the resulting walk might be a simple random walk on $\mathbb{Z}$, which is recurrent and has zero drift.
\end{enumerate}
Condition $\ref{A3}$ implies that the random walk $(Y_n)_{n\in\N}$ has positive drift, i.e.
$$\ell = \lim_{n\to\infty}\frac{\E[|Y_n|]}{n} > 0,$$
see for example \cite{RateOfEscapeFreeProduct}, where $|Y_n|$ is the (word) distance between $Y_n$ and the identity of $G$.

Our proofs rely on large deviation principles for the underlying random walk $(Y_n)_{n \in \mathbb{N}}$, which, in the context of random walks on free products of finite groups, were established in \cite{largedev}. With rate function $I$ describing the exponential decay of events of the form $\{|Y_n| > a n\}$ if $a\geq \ell$, we obtain estimates on the probability that particles in the branching process propagate rapidly.
Let us define
\begin{align*}
v_{\max}:=\sup\{x\in D_I:\,I(x)\leq \log\rho\},
\end{align*}
where $D_I$ is the domain of finiteness of $I$.
More precisely, $v_{\max}$ is defined as the largest intersection point between the graph of $I$ and the horizontal line $y = \log \rho$, whenever such a point exists. 
Our first main result asserts that, for supercritical branching random walks on free products of finite groups, the maximal displacement grows linearly, with growth rate given by $v_{\max}$.

\begin{theorem}\label{thm:main}
Assuming conditions \ref{A1}–\ref{A3}, the maximal displacement of the supercritical branching random walk $(X_v)_{v \in \GWT }$ on $G$ exhibits linear growth at speed $v_{\max}$, namely
$$
\lim_{n \to \infty} \frac{\max_{|v| = n} |X_v|}{n} = v_{\max} \quad \text{almost surely}.
$$
\end{theorem}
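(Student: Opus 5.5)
The proof splits into an upper bound via a first-moment estimate and a lower bound via an embedded supercritical multitype branching process, as the abstract indicates.

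\textbf{Upper bound.} Fix $\varepsilon>0$ and set $a=v_{\max}+\varepsilon$. By the large deviation principle of \cite{largedev}, the probability $\Prob(|Y_n|\ge an)$ decays at exponential rate $\inf_{x\ge a}I(x)$. Since $I$ is convex and increasing on $[\ell,\infty)$, this rate is at least $I(a)$, which exceeds $\log\rho$ strictly. If $a$ lies outside $D_I$, the probability is eventually $0$, because the walk has bounded range $|Y_n|\le n$. By the many-to-one formula,
\[
\E\bigl[\#\{|v|=n: |X_v|\ge an\}\bigr]=\rho^n\,\Prob(|Y_n|\ge an)\le \e^{-\delta n}
\]
for some $\delta>0$ and all large $n$. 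Markov's inequality and Borel--Cantelli then give $\limsup_n \max_{|v|=n}|X_v|/n\le v_{\max}+\varepsilon$ almost surely. The one point to check is the strict inequality $I(v_{\max}+\varepsilon)>\log\rho$, including the boundary case where $v_{\max}$ is the right endpoint of $D_I$. Both follow from convexity of $I$ and the definition of $v_{\max}$ as a supremum.

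\textbf{Lower bound.} Fix $a<v_{\max}$ with $a>\ell$ and $I(a)<\log\rho$. Choose a block length $k$ so that $\Prob(|Y_k|\ge ak)\ge \e^{-k(I(a)+\eta)}$ with $\rho^k\e^{-k(I(a)+\eta)}>1$. Along blocks of $k$ generations I will build a Galton--Watson-type process. A descendant of a retained particle $u$ is kept if it sits $k$ generations below $u$ and its increment $X_u^{-1}X_w$ has length at least $ak$ and does not cancel $X_u$. In the free product, $|X_uX_u^{-1}X_w|=|X_u|+|X_u^{-1}X_w|$ exactly when the first letter of the increment comes from a factor different from the factor of the last letter of $X_u$. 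Because the admissibility of the next increment depends on that last factor, the process is multitype, with the type of a particle being the factor $i\in\{1,\dots,r\}$ of the last letter of its position.

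For each type, the expected number of admissible offspring is $\rho^k$ times the probability that a $k$-step walk reaches length $\ge ak$ with a prescribed first factor. I claim this probability is still comparable to $\e^{-k(I(a)+\eta)}$ up to a constant. To see this, force the first step to lie in a suitable factor at cost $\alpha_j\mu_j(\cdot)>0$, then use the LDP on the remaining $k-1$ steps, with irreducibility (A3) handling the constant losses. The mean matrix is then positive with Perron root $>1$ for large $k$, so the multitype process survives with positive probability. On survival, the positions satisfy $|X_v|\ge a\,|v|$ along block times. Intermediate generations lose only $O(k)$ because $\pi(0)=0$ ensures every particle has descendants and each step changes the length by at most $1$.

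To upgrade positive survival probability to almost sure, use (A2): the Galton--Watson tree never dies, and generation $m$ has $\rho^{m(1+o(1))}\to\infty$ particles. Start independent copies of the embedded process from each of these particles. The probability that all of them fail tends to $0$, and the delay of $m$ generations is negligible after dividing by $n$. Letting $a\uparrow v_{\max}$ finishes the lower bound.

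\textbf{Main obstacle.} The hard step is the lower-bound construction. It requires LDP lower bounds conditioned on the first letter, or equivalently on the type, of the increment, together with uniform control that concatenated increments do not cancel. I would first try to derive this from the unconditioned LDP. The idea is a sub-multiplicativity argument: any walk reaching distance $ak$ can be preceded by a bounded number of steps that fix the starting factor, and the cost of those steps is $\e^{O(1)}$.
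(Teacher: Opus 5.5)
Your proof is correct. The upper bound is the paper's: many-to-one, the LDP upper bound, then Borel--Cantelli. One small correction: a single step changes the word length by at most $K=\max\{|x|:\mu(x)>0\}$, not by $1$. This affects your range remark and your interpolation between block times, but only harmlessly.

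For the lower bound you take a genuinely different route.
\begin{itemize}
\item The paper freezes particles on the stopping line of first exit from $\mathcal{B}_n$. It keeps those with exit time at most $n/a$ whose \emph{entire} path stays in the cone $C(s(X_u))$, and types them by suffix.
\item This forces the paper to prove LDPs for the hitting time $T_n$ (Lemmas \ref{lemm:ldp-tn}--\ref{lem: LDP stoping time}). These need a last-visit decomposition to impose the cone constraint, and a window $n/(a+\varepsilon)<T_n\le n/a$ to control $\rho^{\lceil n/a\rceil-|u|}$ in the many-to-one step.
\item The paper then invokes Jagers' strong branching property for optional stopping lines and finishes with the $0$-$1$ law of Lemma \ref{lem: help 1}.
\end{itemize}
You instead use deterministic blocks of $k$ generations and only an endpoint condition: the first letter of the increment lies outside $G_{s(X_u)}$. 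This already gives exact additivity $|X_w|=|X_u|+|X_u^{-1}X_w|$ and $s(X_w)=s(X_u^{-1}X_w)$. Your mean matrix is then exactly $\rho^k\Prob(|Y_k|\ge ak,\,(Y_k)_1\notin G_i,\,s(Y_k)=j)$. The multitype Galton--Watson property follows from the ordinary branching property at fixed generations. Your ``independent trials from generation $m$'' argument is a direct substitute for the $0$-$1$ law.

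Your route is shorter and avoids all hitting-time estimates. The paper's route keeps whole trajectories inside cones, which is not needed for this theorem, and matches the stopping-line framework of \cite{maxdisgalton}.

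The one step to tighten is the LDP lower bound restricted by first letter. Prepending a step from a factor $m\neq i$ to an unrestricted walk does not by itself fix the first letter. The remaining walk may begin in $G_m$ and merge with or cancel that step. This can genuinely fail, for example when $r=2$, the other factor is $\Z/2\Z$, and $\mu(e)=0$. Instead, split according to $Y_{k-1}$:
\begin{itemize}
\item On $\{(Y_{k-1})_1\in G_i\}$, prepend a step $g\in G_m^\times$ with $m\neq i$ and $\mu(g)>0$. Then $gY_{k-1}$ is already reduced, starts outside $G_i$, and is longer than $Y_{k-1}$.
\item On $\{(Y_{k-1})_1\notin G_i\}$, append a non-identity last step from a factor different from $s(Y_{k-1})$. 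This has conditional probability at least $c'=\min_s\sum_{m\neq s}\alpha_m\mu_m(G_m^\times)>0$ by (A3). It keeps the first letter and does not decrease the length.
\end{itemize}
Together these give $\Prob(|Y_k|\ge ak,\,(Y_k)_1\notin G_i)\ge c\,\Prob(|Y_{k-1}|\ge ak)$, with $c>0$ depending only on $\mu$. That is exactly the constant-loss estimate your construction needs.
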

The proof of Theorem \ref{thm:main} is divided into two parts: the lower bound is Proposition \ref{prop:lower} and the upper bound is Proposition \ref{prop:upper}. The upper bound relies on estimating the probability that particles with speeds exceeding $v_{\max}$ exist, employing large deviation estimates. For the lower bound we borrow  ideas from \cite[Theorem 1.3]{BRW} and \cite{maxdisgalton}, though the intricate geometry of free products introduces additional challenges. We construct a multitype branching process that retains only particles moving faster than a fixed threshold speed. Types are assigned according to the factor group containing the particle’s suffix. For an appropriate choice of parameters, we show that this multitype process survives with positive probability. A $0$-$1$ law then upgrades this to an almost sure lower bound.

An analogous result for the minimal displacement is also established; we omit some details since the proof is similar to the $\mathbb{Z}$-case. The minimal displacement $v_{\min}$ is defined as the smallest intersection point of the graph of $I$ with the line $y = \log \rho$, i.e.
\begin{align}\label{eq:v-min}
v_{\min}:=\inf\{x\geq 0:\,I(x)\leq \log\rho\}.
\end{align}
Note that $I(0)=-\log\mathsf{r}$, where $\mathsf{r}$ is the spectral radius of the random walk $(Y_n)_n$. This implies that $v_{\min}=0$ if and only if $\rho\mathsf{r}> 1$. This regime for $\rho$ is referred to as the recurrent or strong survival case, and so it is consistent that here the minimal displacement has $0$ drift.
\begin{theorem}\label{thm:main-min}
Assuming conditions \ref{A1}–\ref{A3}, the minimal displacement of the supercritical branching random walk $(X_v)_{v \in \GWT }$ on $G$ exhibits linear growth at speed $v_{\min}$, namely
$$\lim_{n\rightarrow\infty} \frac{\min_{|v|=n}|X_v|}{n}=v_{\min},\quad \text{almost surely}.$$
\end{theorem}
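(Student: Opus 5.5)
The argument splits into an upper and a lower bound, mirroring Theorem~\ref{thm:main}, but with the roles reversed. For minimal displacement the easy direction is now the lower bound $\liminf_n \min_{|v|=n}|X_v|/n\ge v_{\min}$. If $v_{\min}=0$ there is nothing to prove. Otherwise fix $0\le a<v_{\min}$, so that $I(a)>\log\rho$. By the large deviation principle of \cite{largedev} applied on the lower side of $\ell$ (events $\{|Y_n|\le an\}$ with $a<\ell$; note $v_{\min}\le \ell$ since $I(\ell)=0<\log\rho$, and $I$ is nonincreasing on $[0,\ell]$), we get
\begin{equation*}
\limsup_{n}\tfrac1n\log\Prob(|Y_n|\le an)\le -\inf_{x\le a}I(x)=-I(a).
\end{equation*}
By the many-to-one formula,
\begin{equation*}
\E\,\#\{|v|=n:|X_v|\le an\}=\rho^n\Prob(|Y_n|\le an)\le \e^{-n(I(a)-\log\rho-\varepsilon)},
\end{equation*}
which is summable. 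Markov's inequality and Borel--Cantelli then give $\min_{|v|=n}|X_v|>an$ eventually almost surely. Letting $a\uparrow v_{\min}$ along a countable sequence gives the bound.

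For the upper bound $\limsup_n \min_{|v|=n}|X_v|/n\le v_{\min}$, fix $b>v_{\min}$ with $b<\ell$ (if $v_{\min}=\ell$ any $b>\ell$ is handled directly by the law of large numbers along a single ray, so assume $v_{\min}<\ell$). Choose $b'\in(v_{\min},b)$ with $I(b')<\log\rho$, which is possible by convexity and monotonicity of $I$ on $[0,\ell]$. I will copy the multitype construction used for Proposition~\ref{prop:lower}. Fix a large block length $k$. In each generation of blocks, a particle at $x$ keeps those descendants $k$ steps later whose displacement $y$ satisfies $|xy|-|x|\le b'k$, up to a bounded error, and additionally require that the first letter of the increment $y$ lie in a factor different from the suffix type of $x$. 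This reduced-word condition makes $|x y| = |x|+|y|$ exactly, so lengths add and the suffix (its factor) is the new type. The mean matrix of this $r$-type Galton--Watson process has entries of order $\rho^k\Prob(|Y_k|\approx b'k,\ \text{first letter constraint})$. Using the LDP lower bound for the interval around $b'$ together with a bounded-cost adjustment of the first and last letters (irreducibility of the $\mu_i$ and $\alpha_i>0$ from \ref{A3}), each entry is at least $\e^{k(\log\rho-I(b')-\varepsilon)}$, and the matrix is positive. Its Perron root therefore exceeds $1$ for $k$ large, so the process survives with positive probability. On survival there is, for every block time $mk$, a particle with $|X_v|\le b'mk+O(m)$; between block times one pays at most $k$ in distance. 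Choosing the block error small gives $\min_{|v|=n}|X_v|\le bn$ eventually on the survival event.

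Finally I upgrade positive probability to almost sure, using \ref{A2}. Since no particle dies, after $N$ generations there are at least $N$ particles, and in fact exponentially many almost surely. Each of them independently starts a copy of the multitype process, shifted by its position, which changes distances by at most $N$ and is negligible after division by $n$. Hence the probability that all copies die is at most $(1-q)^{Z_N}\to0$, where $q$ is the survival probability. Combined with the first paragraph, this proves Theorem~\ref{thm:main-min}.

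The main obstacle is the positivity of the truncated mean matrix entries: the LDP of \cite{largedev} controls $|Y_k|$ only, not jointly with the first letter and suffix type. I would handle this by conditioning on a fixed-length prefix and suffix, using that by the free-product structure the probability of $|Y_k|\in[b'k-c,b'k]$ with a prescribed initial and terminal factor is comparable, within a constant factor independent of $k$, to the unconstrained probability. This is exactly the estimate already required in Proposition~\ref{prop:lower}, now applied at speeds below $\ell$.
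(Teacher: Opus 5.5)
Your lower bound (many-to-one plus Borel--Cantelli for each $a<v_{\min}$) and your upgrade from positive probability to probability one via $\E\big[(1-q)^{|\GWT_N|}\big]\to 0$ are essentially the paper's arguments. Your upper bound is correct but takes a much heavier route than necessary.

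The paper uses only subadditivity of word length. It runs a \emph{single-type} Galton--Watson process in which a particle $u$ keeps its descendants $v$ with $|v|=|u|+n$ and $d(X_u,X_v)\le na$. By Lemma~\ref{lem: many-to-one} the mean offspring number is $\rho^n\Prob(|Y_n|\le na)$, which exceeds $1$ for large $n$ directly from the LDP lower bound. On survival, the triangle inequality $|X_w|\le |X_v|+d(X_v,X_w)$ gives a particle at distance at most $mna$ in generation $mn$.

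Exact additivity $|xy|=|x|+|y|$ is what forces cones and types in the lower bound for the \emph{maximal} displacement. Here you only need an upper bound on $|X_v|$. So your first-letter constraint, the $r$ types, and the joint estimate you single out as the main obstacle are self-inflicted; for this statement they buy nothing and can be dropped.

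Your construction is nevertheless sound. Insert a bounded number of steps at the beginning and end of each block, which is possible by $\alpha_i>0$ and irreducibility of the $\mu_i$. Choose the inserted words so that no cancellation occurs and the step count does not depend on the case. This bounds each constrained probability below by $c\,\Prob\big(|Y_{k-c_1}|\le b'k-c_2\big)$. Two cautions apply:
\begin{itemize}
\item This padding argument is not the estimate of Lemma~\ref{lem: help large dev}. That lemma concerns exit times at speeds $\ge\ell$ and rests on the last visit to $G_i$.
\item You should work with the event $\{|Y_k|\le b'k\}$ rather than a bounded-width window $[b'k-c,b'k]$, which the LDP does not control.
\end{itemize}

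Two harmless slips: $|\GWT_N|\ge N$ need not hold (only $|\GWT_N|\to\infty$ almost surely), and the displacement costs between block times and from the shift are up to $Kk$ and $KN$, not $k$ and $N$.
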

    
\paragraph{Outline.} Section \ref{sec:prelim} introduces the concepts and tools used throughout the paper. In Section \ref{sec:large-dev}, we establish large deviation estimates for the hitting time of distance $n$, which are needed in constructing the associated multitype branching process. Section \ref{sec:main} contains the proof of Theorem \ref{thm:main}, organized into three parts: Subsection \ref{sec:0-1} proves a 0–1 law for the lower bound on the maximal displacement; Subsection \ref{sec:upper} establishes the upper bound, in Subsection \ref{sec:multi-typ} we construct the multitype branching process of fast-moving particles and we show that it survives with positive probability; Subsection \ref{sec:lower} derives the lower bound using the multitype process. Finally, in Section \ref{sec: min dis} we prove Theorem \ref{thm:main-min}.

\section{Preliminaries}\label{sec:prelim}

\textbf{Free products of groups.} One method of constructing new groups with tree-like properties from given ones is to take their free product.
Let $r\in\N$ and $G_1,...,G_r$ be finite groups. We assume that all the groups are disjoint; however, they are allowed to be isomorphic to each other. For all $k\in\{1,...,r\}$ we denote by $G_k^\times$ the set of all elements of $G_k$, except for the neutral element which is denoted by $e_k$, that is $G_k^\times:=G_k\backslash\{e_k\}$,
and we write $L:=\bigcup_{k=1}^r G_k^\times$.
The free product $G=G_1*...*G_r$ is defined as the set of finite words over the alphabet $L$, that is
$$G:=\bigl\{x_1x_2...x_n:\ n\in\N,\forall j\leq n: (x_j\in L \text{ and }\forall k\in\{1,...,r\}:x_j\in G_k^\times\Rightarrow x_{j+1}\notin G_k^\times)\bigr\}\cup\{e\},$$
where $e$ denotes the empty word.
In words, the free product consists of all words with letters in $L$, such that no two consecutive letters are from the same group. We define a group operation on $G$ via concatenation and subsequent reduction of words: given two words $x=x_1\ldots x_n$ and $y=y_1\ldots y_m$ in $G$, their concatenation is
$xy:=x_1\ldots x_n y_1\ldots y_m$.
The product of $x$ and $y$ is obtained by performing all possible reductions in the concatenation $xy$. We write $xy$ for the reduced word. 

\begin{example}
Consider the groups $G_1=\{e_1,a\}$ and $G_2=\{e_2,b,b^2\}$ where $e_1$ and $e_2$ is the identity of $G_1$ and $G_2$ respectively, and  $a^2=e_1$ and $b^3=e_2$. So $G_1\cong\Z\slash 2\Z$ and $G_2\cong \Z\slash 3\Z$.
For words $x=ab^2ab$ and $y=b^2a$, the concatenated word $xy$ is $xy=ab^2abb^2a$, so by performing all possible reductions $ab^2a(bb^2)a \rightarrow ab^2(aa)\rightarrow ab^2$, we obtain
$xy=ab^2$. 
\end{example}
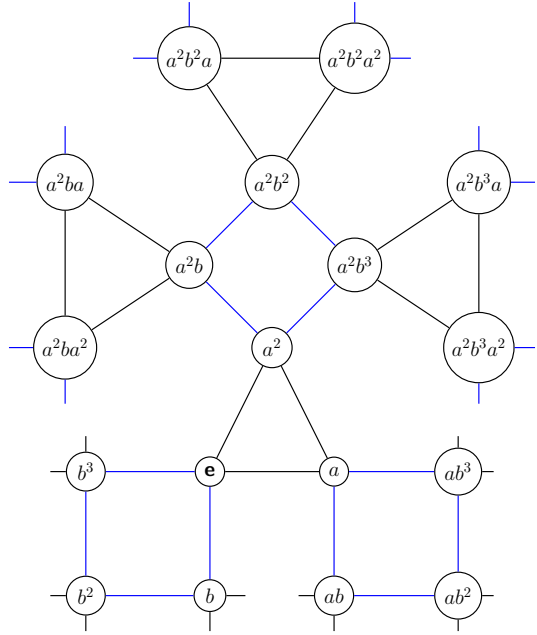
\begin{figure}[t]
\centering
\scalebox{.55}{\begin{tikzpicture}[->,>=stealth,auto,node distance=3cm,
  thick,main node/.style={circle,draw,font=\sffamily\Large\bfseries}]

  \node[main node] (1) at (0,0) {e};

  \node[main node] (2) at (3,0) {$a$};
  \node[main node] (3) at (1.5,3) {$a^2$};

  \node[main node] (4) at (0,-3) {$b$};
  \node[main node] (5) at (-3,-3) {$b^2$};
  \node[main node] (6) at (-3,0) {$b^3$};

  \node[main node] (7) at (3,-3) {$ab$};
  \node[main node] (8) at (6,-3) {$ab^2$};
  \node[main node] (9) at (6,0) {$ab^3$};

  \node[main node] (10) at (-0.5,5) {$a^2b$};
  \node[main node] (11) at (3.5,5) {$a^2b^3$};
  \node[main node] (12) at (1.5,7) {$a^2b^2$};

  \node[main node] (13) at (-3.5,7) {$a^2ba$};
  \node[main node] (14) at (-3.5,3) {$a^2ba^2$};

  \node[main node] (15) at (-0.5,10) {$a^2b^2a$};
  \node[main node] (16) at (3.5,10) {$a^2b^2a^2$};

  \node[main node] (17) at (6.5,7) {$a^2b^3a$};
  \node[main node] (18) at (6.5,3) {$a^2b^3a^2$};

  \node[] (19) at (0,-4) {};
  \node[] (20) at (1,-3) {};
  \node[] (21) at (-3,-4) {};
  \node[] (22) at (-4,-3) {};
  \node[] (23) at (-4,0) {};
  \node[] (24) at (-3,1) {};

  \node[] (25) at (3,-4) {};
  \node[] (26) at (2,-3) {};
  \node[] (27) at (6,-4) {};
  \node[] (28) at (7,-3) {};
  \node[] (29) at (7,0) {};
  \node[] (30) at (6,1) {};

  \node[] (31) at (-3.5,8.5) {};
  \node[] (32) at (-5,7) {};
  \node[] (33) at (-5,3) {};
  \node[] (34) at (-3.5,1.5) {};
  \node[] (35) at (-2,10) {};
  \node[] (36) at (-0.5,11.5) {};
  \node[] (37) at (5,10) {};
  \node[] (38) at (3.5,11.5) {};
  \node[] (39) at (8,7) {};
  \node[] (40) at (6.5,8.5) {};
  \node[] (41) at (8,3) {};
  \node[] (42) at (6.5,1.5) {};

  \path[-] (1) edge node {} (2);
  \path[-] (2) edge node {} (3);
  \path[-] (3) edge node {} (1);

  \path[-] (10) edge node {} (13);
  \path[-] (13) edge node {} (14);
  \path[-] (14) edge node {} (10);

  \path[-] (11) edge node {} (17);
  \path[-] (17) edge node {} (18);
  \path[-] (18) edge node {} (11);

  \path[-] (12) edge node {} (15);
  \path[-] (15) edge node {} (16);
  \path[-] (16) edge node {} (12);

  \path[-] (4) edge node {} (19);
  \path[-] (4) edge node {} (20);
  \path[-] (5) edge node {} (21);
  \path[-] (5) edge node {} (22);
  \path[-] (6) edge node {} (23);
  \path[-] (6) edge node {} (24);

  \path[-] (7) edge node {} (25);
  \path[-] (7) edge node {} (26);
  \path[-] (8) edge node {} (27);
  \path[-] (8) edge node {} (28);
  \path[-] (9) edge node {} (29);
  \path[-] (9) edge node {} (30);

  \path[-,blue] (1) edge node {} (4);
  \path[-,blue] (4) edge node {} (5);
  \path[-,blue] (5) edge node {} (6);
  \path[-,blue] (6) edge node {} (1);

  \path[-,blue] (2) edge node {} (7);
  \path[-,blue] (7) edge node {} (8);
  \path[-,blue] (8) edge node {} (9);
  \path[-,blue] (9) edge node {} (2);

  \path[-,blue] (3) edge node {} (10);
  \path[-,blue] (10) edge node {} (12);
  \path[-,blue] (12) edge node {} (11);
  \path[-,blue] (11) edge node {} (3);

  \path[-,blue] (13) edge node {} (31);
  \path[-,blue] (13) edge node {} (32);
  \path[-,blue] (14) edge node {} (33);
  \path[-,blue] (14) edge node {} (34);
  \path[-,blue] (15) edge node {} (35);
  \path[-,blue] (15) edge node {} (36);
  \path[-,blue] (16) edge node {} (37);
  \path[-,blue] (16) edge node {} (38);
  \path[-,blue] (17) edge node {} (39);
  \path[-,blue] (17) edge node {} (40);
  \path[-,blue] (18) edge node {} (41);
  \path[-,blue] (18) edge node {} (42);
\end{tikzpicture}}
\caption{A part of the Cayley graph of the free product $\Z/3\Z*\Z/4\Z$; $a$ is the generator of $\Z/3\Z$ and $b$ is the generator of $\Z/4\Z$, and $S=\{a,a^{-1},b,b^{-1}\}$. Black edges indicates multiplication by  $a$ or $a^{-1}$, and blue edges indicate multiplication by $b$ or $b^{-1}$.}\label{fig:free-prod}
\end{figure}
\textbf{Cayley graphs of free products.} For finite groups $G_1,...,G_r$ and $k\in\{1,...,r\}$, let $S_k\subseteq G_k^\times$ be a symmetric set that generates $G_k$, i.e. $\langle S_k \rangle = G_k$. For $G=G_1*...*G_r$, we take the set of generators $S=\bigcup_{k=1}^r S_k$.
The Cayley graph $\text{Cay}(G,S)$ is the graph with vertex set given by the group elements of $G$, and between vertices $x,y\in G$ there is an edge if $x^{-1}y\in S$; see Figure \ref{fig:free-prod}. For $x,y\in G$, denote by $path(x,y)$ the set of all paths in $\text{Cay}(G,S)$ from $x$ to $y$. The word length $|x|$ of $x\in G$ is
$|x|:=\inf_{\gamma\in path(e,x)}\text{len}(\gamma),$
where $\text{len}(\gamma)$ denotes the length (the number of edges) of the path $\gamma$. The distance between $x,y\in G$ is defined as
$d(x,y):=|x^{-1}y|$.
We denote by $\mathcal{B}_n:=\{x\in G:|x|<n\}$
the set of all words of length less than $n$ in $G$. For $x,y\in G$, we write $x\leq y$ if $x$ lies on a path from $e$ to $y$. The type $s(x)$ of a group element $x=x_1x_2\dots x_n\in G$ is the index of the group the last letter $x_n$ of $x$ belongs to, i.e.
$$s(x) = i \ :\iff x_n\in G_i.$$ 
We write $x_1$ for the first letter in the word $x$,
and we define the cone of type $i$ (for $1\leq i\leq r$) as
\begin{equation}\label{eq: cone def}
C(i):=\{y\in G : y_1\notin G_i\}.
\end{equation}

\textbf{Random walks on free products.}\label{par: RWonFreeProd} Let $\alpha_1, \ldots, \alpha_r \in (0,1)$ with $\sum_{i=1}^r \alpha_i = 1$. For each finite group $G_k$ with $k \in \{1, \ldots, r\}$, choose an irreducible probability measure $\mu_k \in \mathsf{Prob}(G_k)$, and set
$\mu := \sum_{i=1}^r \alpha_i \mu_i.$
The random walk $(Y_n)_{n \in \mathbb{N}}$ on $G$ with law $\mu$ is the Markov chain whose one-step transition probabilities are $p(x,y) := \mathbb{P}(Y_{n+1} = y \mid Y_n = x) = \mu(x^{-1} y)$ for $x, y \in G$.
In each step, we first choose a group $G_k$ with probability $\alpha_k$, and then sample an element from $G_k$ according to $\mu_k$. Denote the increments of the random walk by $(\xi_n)_{n \in \mathbb{N}}$, where $\xi_n = Y_{n-1}^{-1} Y_n$.
We refer to such random walks $(Y_n)$ as being of nearest-neighbor type on $\mathrm{Cay}(G,S)$. Since the factors $G_1, \ldots, G_r$ are finite, $(Y_n)_{n \in \mathbb{N}}$ has bounded step size: there exists $K \in \mathbb{R}$ with $\mathbb{P}(|\xi_1| \le K) = 1$. One may take
$$
K := \max\{|x| : \mu(x) > 0\} \le \max\{|x| : x \in L\} < \infty.
$$
The spectral radius $\mathsf{r}$ of $(Y_n)$ is defined by
$\mathsf{r} := \limsup_{n \to \infty} \mathbb{P}(Y_n = e)^{1/n}.$
Moreover, if $G$ has exponential growth (i.e., $|\mathcal{B}_n|$ grows exponentially in $n$) and $\operatorname{supp}(\mu)$ generates $G$, then $\mathsf{r} \in (0,1)$.

\begin{figure}
\centering
\includegraphics[scale=0.5]{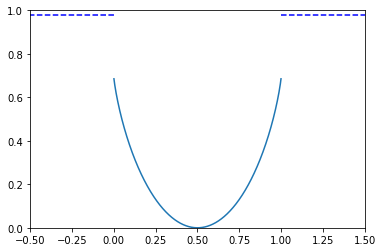}
\caption{The rate function on $\Z$ with increments ${\Prob(\xi_1=0)=\Prob(\xi_1=1)=\frac{1}{2}}$ is $I(x)=x\log(x)+(1-x)\log(1-x)+\log(2)$. The dashed lines indicate that $I$ equals $\infty$ at these values. The drift of the walk is $1/2$, and this is also a root of the rate function.}
\end{figure}

\textbf{Large deviations and rate functions.} For the random walk $(Y_n)_{n \in \mathbb{N}}$ on $G$, we define, for each $n \in \mathbb{N}$, the logarithmic moment generating function
\begin{align*}
	\Lambda_n:\R\rightarrow\R \quad \text{given by}\quad \Lambda_n(t)=\frac{1}{n}\log\bigl(\mathbb{E}\big[e^{t|Y_n|}\big]\bigr)
\end{align*}
and its limit
\begin{align}\label{eq:lim_log_moment}
\Lambda(t)=\lim_{n\rightarrow\infty}\Lambda_n(t),\end{align}
which exists for all $t\in \R$ due to the subadditivity and Fekete's lemma. In view of \cite{largedev}, if \ref{A3} holds, then the sequence $\big(\frac{|Y_n|}{n}\big)_{n\in\N}$  fulfills a large deviation principle with rate function $I$ given as the Legendre-Fenchel transform of the function $\Lambda$ defined in \eqref{eq:lim_log_moment}, that is, for all $x\geq 0$
\begin{align}
	\label{eq:Legendre_transform}
	I(x):=\sup_{t\in \R}\{xt-\Lambda(t)\}.	
\end{align}
In particular, for the rate function $I$ and every measurable set $A\subseteq [0,\infty)$, it holds
$$-\inf_{x\in A^\circ}I(x)\leq \liminf_{n\rightarrow\infty}\frac{1}{n}\log\mathbb{P}\bigg(\frac{|Y_n|}{n}\in A\bigg)\leq \limsup_{n\rightarrow\infty}\frac{1}{n}\log\mathbb{P}\bigg(\frac{|Y_n|}{n}\in A\bigg)\leq -\inf_{x\in\overline{A}}I(x),$$
where $A^\circ$ is the interior and $\overline{A}$ is the closure of $A$.
The function $I$ describes the decay rate of the probability of rare events of the form $\{ |Y_n|\geq an \}$ for $a\geq \ell$, and the decay of the probability of rare events of the form $\{|Y_n| \leq an\}$ for $0 < a\leq \ell$, as $n\to\infty$.
We collect below some basic facts about the rate function $I$.
\begin{lemma}\label{lem:rate-beh}
Let $(Y_n)_{n \in \mathbb{N}}$ be a random walk with step distribution $\mu$ satisfying \ref{A3} on the free product $G$. Let $\ell > 0$ denote its drift, i.e., $\lim_{n \to \infty} \frac{|Y_n|}{n} = \ell$ almost surely. Moreover, let $I$ be the rate function of $\big(\frac{|Y_n|}{n}\big)_{n \in \mathbb{N}}$. 
\begin{enumerate}[label=(I\arabic*)]
\setlength\itemsep{0em}
\item Let $D_I = \{x \in [0,\infty) : I(x) < \infty\}$ be the effective domain of $I$. Then $D_I$ is an interval with lower endpoint $0$ and upper endpoint $\beta \in (0,\infty)$, and $I$ is continuous on $D_I$.\label{I1}
\item $I(\ell)=0$.\label{I2}
\item $I$ is strictly increasing on $[\ell,\beta)$.\label{I3}
\item The function $x\mapsto \frac{I(x)}{x}$ is strictly increasing on $[\ell,\beta)$.\label{I4}
\end{enumerate}
\end{lemma}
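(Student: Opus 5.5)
We derive all four properties from convex duality, using two facts: $\Lambda$ is a convex, finite, nondecreasing function on $\R$, and $I=\Lambda^*$ is its Legendre--Fenchel transform as in \eqref{eq:Legendre_transform}. Convexity of $\Lambda$ comes from Hölder's inequality applied to each $\Lambda_n$ and passes to the pointwise limit. Finiteness comes from the bounded step size, $|Y_n|\le Kn$, which gives $\Lambda(t)\le K|t|$. Monotonicity holds because $|Y_n|\ge 0$. Consequently $I$ is convex, lower semicontinuous and nonnegative, and $I(x)\ge x\cdot 0-\Lambda(0)=0$.

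\textbf{(I1).} Since $I$ is convex, $D_I$ is an interval. For the upper endpoint, set $\beta:=\lim_{t\to\infty}\Lambda(t)/t=\lim_{t\to\infty}\Lambda'(t)\le K$. For $x>\beta$, we have $xt-\Lambda(t)\to\infty$, so $I(x)=\infty$. For $x<\beta$, the supremum over $t\ge 0$ is finite. The supremum over $t<0$ is at most $-\inf_{t<0}\Lambda(t)\le -\log \mathsf r$, because $\Lambda(t)\ge \limsup_n \frac1n\log \Prob(Y_n=e)=\log\mathsf r$ and $\mathsf r>0$. Hence $[0,\beta)\subseteq D_I$, and in particular $I(0)=-\lim_{t\to-\infty}\Lambda(t)=-\log\mathsf r<\infty$. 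To see $\beta>0$, use $\beta\ge \Lambda'(0^+)\ge \ell$, which holds by Jensen applied to $\Lambda_n$ together with $\ell>0$.

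Continuity on $D_I$ is the delicate point. A convex function is continuous on the interior of its domain, and continuity at $0$ from the right follows from convexity combined with lower semicontinuity. Continuity at $\beta$, when $\beta\in D_I$, also follows from lower semicontinuity plus convexity on a one-dimensional interval: along a segment, $\liminf\ge I(\beta)$ comes from lower semicontinuity, and $\limsup\le I(\beta)$ comes from convexity.

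\textbf{(I2).} By Jensen, $\Lambda(t)\ge t\ell$, so $I(\ell)\le 0$, and since $I\ge 0$ we get $I(\ell)=0$. Alternatively, this follows from the LDP together with the law of large numbers $|Y_n|/n\to\ell$.

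\textbf{(I3) and (I4).} The main obstacle is strictness: convexity together with $I(\ell)=0$ and $I\ge0$ only gives that $I$ is nondecreasing on $[\ell,\beta)$. I would first show that $I(x)>0$ for every $x>\ell$. This is an exponential concentration bound for $|Y_n|$ above its mean. One route is to use that $\Lambda$ is differentiable at $0$ with $\Lambda'(0)=\ell$, which should follow from the explicit description of $\Lambda$ in \cite{largedev} via renewal/generating-function structure. Another route is a direct bounded-differences argument: $|Y_n|$ is a $K$-Lipschitz function of the i.i.d.\ increments, so Azuma gives $\Prob(|Y_n|\ge xn)\le e^{-cn}$ for $x>\ell$, and the LDP upper bound then yields $\inf_{[x,\infty)}I>0$. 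Once $I(x)>0$ for $x>\ell$ is known, strict monotonicity follows from convexity. For $\ell\le x<y<\beta$, if $I(x)=I(y)$ then convexity forces $I$ to be constant on $[\ell,y]$, equal to $I(\ell)=0$, which contradicts positivity for $x>\ell$. This proves (I3).

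For (I4), convexity gives $I(x)\le \frac{y-x}{y}I(0)+\frac{x}{y}I(y)$, which is not directly the claim because $I(0)>0$. Instead, use the chord through $(\ell,0)$: for $\ell\le x<y$, convexity gives $I(x)\le \frac{x-\ell}{y-\ell}I(y)$. Since $I(y)>0$, $\ell>0$, and $\frac{x-\ell}{y-\ell}<\frac{x}{y}$ (equivalent to $\ell(y-x)>0$), we obtain $I(x)<\frac{x}{y}I(y)$, i.e. $I(x)/x<I(y)/y$.
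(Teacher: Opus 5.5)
The gap is in (I3). Your arguments for (I1), (I2) and (I4) are sound. For (I1), your duality argument uses lower semicontinuity to get continuity at the endpoints of $D_I$, which is more careful than the paper's bare appeal to convexity. The Jensen bound $\Lambda(t)\ge t\ell$ for (I2) is a valid, cleaner alternative to the paper's appeal to the LDP. Your (I4) is exactly the paper's chord argument through $(\ell,0)$. (You only show $I(0)\le-\log\mathsf{r}$, not equality, but finiteness is all (I1) needs.)

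\textbf{The gap.} The step you yourself single out, $I(x)>0$ for $x>\ell$, is never proved.

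Your first route only restates the claim. Since $\Lambda$ is a finite convex function with $\Lambda(0)=0$, Fenchel--Young gives $\partial\Lambda(0)=\{x\ge 0: I(x)=0\}$. So ``$\Lambda'(0)=\ell$'' is equivalent to $I$ vanishing only at $\ell$, which is a slight strengthening of what you need. Deferring it to an unspecified formula in Corso's work is not an argument.

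Your second route rests on a false premise: $|Y_n|=|\xi_1\cdots\xi_n|$ is \emph{not} a bounded-differences function of the increments. Replacing $\xi_k$ by $\xi_k'$ multiplies $Y_n$ on the right by the conjugate $B^{-1}\xi_k^{-1}\xi_k'B$, where $B=\xi_{k+1}\cdots\xi_n$, and such conjugates have unbounded length. For example, take $\langle a\rangle*\langle b\rangle*\langle c\rangle$ (three copies of $\mathbb{Z}/2\mathbb{Z}$, each $\mu_i$ uniform). Let the first $m$ increments spell a reduced word $w$ in $a,b$ and the last $m$ spell $w^{-1}$. Then the middle increment $e_3$ gives $|Y_{2m+1}|=0$, while the middle increment $c$ gives $|Y_{2m+1}|=2m+1$. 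So McDiarmid/Azuma does not apply. A Doob-martingale repair would need uniform control of how far the walk backtracks, which you do not provide.

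\textbf{The missing idea} is subadditivity of word length, $|gh|\le|g|+|h|$, which lets you dominate $|Y_n|$ by sums of i.i.d.\ variables. The paper cuts the increments into independent blocks $Y_i^{(m)}$ of length $m$, so that $|Y_{nm}|\le\sum_i|Y_i^{(m)}|$ with summands distributed as $|Y_m|$. Cramér's theorem with $mx>\mathbb{E}|Y_m|$ then gives $I(x)\ge I_m(mx)/m>0$.

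In your duality language this takes two lines:
\begin{itemize}
\item For $t\ge 0$, $\mathbb{E}[e^{t|Y_{km}|}]\le(\mathbb{E}[e^{t|Y_m|}])^k$, hence $\Lambda\le\Lambda_m$ on $[0,\infty)$.
\item $\Lambda_m$ is smooth with $\Lambda_m'(0)=\mathbb{E}|Y_m|/m$, which is $<x$ for $m$ large. So $xt-\Lambda(t)\ge xt-\Lambda_m(t)>0$ for small $t>0$, i.e.\ $I(x)>0$.
\end{itemize}
With this step inserted, your deduction of strict monotonicity from convexity and your proof of (I4) go through.
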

\begin{proof}
\textit{\ref{I1}:} It follows from \cite[Proposition 5.1.]{largedev} that $I$ is convex, and thus  $D_I$ is a convex subset of $[0,\infty)$. Since the random walk $(Y_n)$ has finite jump size almost surely, $D_I$ is bounded. Indeed, for
$K=\sup\{|g|:g\in\text{supp}(\mu)\}$, from the triangular inequality we get $|Y_n|\leq Kn$ almost surely, which implies that $I=\infty$ on $(K,\infty)$. Thus, $D_I$ is a bounded interval. It also holds that $0\in D_I$, since $I(0)=-\log \mathsf{r}$, where $\mathsf{r}\in(0,1)$ is the spectral radius of $Y_n$. Finally, convexity of $I$ on $D_I$ implies continuity of $I$ on $D_I$. 

\textit{\ref{I2}:} For all open subsets $V\subseteq \R$ with $\ell\in V$ it holds 
$$\liminf_{n\rightarrow\infty}\frac{1}{n}\log\mathbb{P}\Big(\frac{|Y_n|}{n}\in V\Big)=0,$$
which together with \cite[Theorem 4.1.11.]{largedev_dembo} implies that the rate function at $\ell$ is given by
$$I(\ell)=\sup_{V\text{ open, }x\in V}-\liminf_{n\rightarrow\infty}\frac{1}{n}\log\mathbb{P}\Big(\frac{|Y_n|}{n}\in V\Big)=0.$$

\textit{\ref{I3}:} It suffices to show that for any $x>\ell$ it holds $I(x)>0$. Then the strict monotonicity follows from the convexity of $I$. First, assume that $\beta > \ell$, otherwise it is clear that $I$ is strictly increasing on $[\ell,\beta)$. For any $x\in (\ell,\beta)$, in view of the continuity of $I$ on $(0,\beta)$ we have
$$-I(x)=\lim_{n\to\infty} \frac{1}{n}\log\Prob(|Y_n|\geq nx).$$
For $i,m\in\N$ we set $Y_i^{(m)}:=\xi_{im}\xi_{im+1}\cdots\xi_{(i+1)m-1}$. Then  the  random variables  $(Y_i^{(m)})_{i\in \mathbb{N}}$ are independent and distributed as $Y_m$. Let $I_m$ be the large deviations rate function of a random walk on $\Z$ with increments distributed as $|Y_m|$. Then  $I_m(y)>0$ if $y>\E\big[|Y_m|\big]$. Moreover, from Fekete's lemma we have $\lim_{m\to\infty}\frac{\E|Y_m|}{m} = \ell$.
Let $x>\ell$ and choose $m\in\N$ such that $mx>\E\big[|Y_m|\big]$. Then 
\begin{align*}
   - I(x)&=\lim_{n\to\infty} \frac{1}{n}\log\Prob(|Y_n|\geq nx)=\lim_{n\to\infty} \frac{1}{nm}\log\Prob(|Y_{nm}|\geq nmx)\\
    &\leq\frac{1}{m}\lim_{n\to\infty}\frac{1}{n}\log\Prob\Big(\sum_{i=1}^n|Y_i^{(m)}|\geq nmx\Big)=-\frac{I_m(mx)}{m}<0,
\end{align*}
which implies that $I(x)>0$ and this finishes the proof.

\textit{\ref{I4}:} Let $\ell\leq x<y<\beta$.
The convexity of $I$ implies that $x\mapsto \frac{I(x)}{x-\ell}=\frac{I(x)-I(\ell)}{x-\ell}$ is nondecreasing. Since $\ell>0$ and $I(y)>0$ by \textit{\ref{I3}}, we have
$$\frac{I(x)}{x}=\frac{I(x)}{x-\ell}\frac{x-\ell}{x}\leq\frac{I(y)}{y-\ell}\frac{x-\ell}{x}<\frac{I(y)}{y-\ell}\frac{y-\ell}{y}=\frac{I(y)}{y}.$$
\end{proof}

\textbf{Branching random walks (shortly BRW).} 
Let $\pi$ be a probability distribution on $\mathbb{N} \cup \{0\}$ with mean $\rho = \mathbb{E}[\pi]$. Assume $\rho > 1$ and $\pi(0) = 0$. We define the Galton–Watson tree $\mathbb{T}$ -- known in the literature as the Harris--Ulam tree -- with offspring law $\pi$ inductively as follows. The vertex set is $\mathbb{N}^* = \bigcup_{k=1}^\infty \mathbb{N}^k \cup \{\emptyset\}$, with root $\emptyset$. A vertex $v$ at level $n$ independently produces a random number of children $\xi \sim \pi$; its offspring are added as concatenations $vk$ for $k \le \xi$. This means that each individual lives for exactly one time unit, produces $\xi$ offspring, and immediately dies afterwards.  For $v \in \mathbb{T}$, its level is $|v|$ (the word length), and the set of vertices at level $n$ is
$$\mathbb{T}_n := \{v \in \mathbb{T} : |v| = n\}.$$
For $v \in \mathbb{T}$ and $k \le |v|$, let $v_k$ denote the prefix of $v$ of length $k$, i.e., the ancestor of $v$ in generation $k$. Given the Galton–Watson tree $\mathbb{T}$, we define a random walk indexed by $\mathbb{T}$, also known as a branching random walk. Let $(\zeta_v)_{v \in \mathbb{T}}$ be i.i.d. with $\zeta_v \sim \mu$ for every $v \in \mathbb{T}$. The branching random walk indexed by $\mathbb{T}$ is the collection $(X_v)_{v \in \mathbb{T}}$ defined by
$$X_v = \zeta_{v_1}\,\zeta_{v_2}\cdots \zeta_{v} \quad \text{for } v \in \mathbb{T}, \qquad X_{\emptyset} = e.$$
If the branching random walk on $\mathbb{T}$ starts from $x \in G$ (i.e., $X_{\emptyset} = x$), we denote it by $(X_v^x)_{v \in \mathbb{T}}$. Under assumptions \ref{A1} and \ref{A2}, the Galton–Watson tree $\mathbb{T}$ survives almost surely. The process $(X_v^x)_{v \in \mathbb{T}}$ is the branching random walk on $G$ with offspring law $\pi$ and step law $\mu$; we write $\mathrm{BRW}(G,\pi,\mu)$ for short. A central tool in our analysis is the many-to-one formula, which reduces the first moment study of the supercritical branching random walk $(X_v)_{v \in \mathbb{T}}$ to that of the single random walk $(Y_n)$. While the formula holds more generally (see \cite{manytofew}), we state only the special case needed here.
\begin{lemma}\label{lem: many-to-one}
Under assumption \ref{A2}, for any measurable function $f:G\to \R$ it holds
$$\E\Big[\sum_{|v|=n}f(X_v)\Big]=\rho^n\E[f(Y_n)].$$
\end{lemma}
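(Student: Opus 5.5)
We argue by induction on $n$, conditioning on the first generation of the tree and using the branching property together with the Markov property of the increments. Since $\pi(0)=0$ and $\rho=\sum_k k\pi(k)<\infty$ by \ref{A1}, the quantity $\E[\text{number of children}]=\rho$ is finite. We first treat the case where $f$ is bounded below (for instance $f\geq 0$). The general case then follows by splitting $f=f^+-f^-$, whenever at least one side is finite.

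The base case $n=0$ is trivial, since $\mathbb{T}_0=\{\emptyset\}$ and $X_\emptyset=Y_0=e$. For $n=1$, let $N\sim\pi$ be the number of children of the root. Conditionally on $N$, the positions $X_1,\dots,X_N$ are i.i.d.\ with law $\mu$ and independent of $N$. By Wald's identity (or Tonelli's theorem for $f\geq 0$),
$$\E\Big[\sum_{|v|=1}f(X_v)\Big]=\E[N]\,\E[f(\zeta)]=\rho\,\E[f(Y_1)].$$

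For the induction step, write each $v$ with $|v|=n+1$ as $v=ku$, where $k\le N$ indexes a child of the root and $|u|=n$ is a vertex of the subtree $\mathbb{T}^{(k)}$ rooted at $k$. By construction of the Harris--Ulam tree, conditionally on $N$ and on $(\zeta_1,\dots,\zeta_N)$, the subtrees $\mathbb{T}^{(k)}$ together with their increments are i.i.d.\ copies of $(\mathbb{T},(\zeta_v))$. Moreover $X_{ku}=\zeta_k X^{(k)}_u$, where $X^{(k)}$ is a branching random walk started at $e$. Define $g(x):=\E\big[\sum_{|u|=n}f(xX_u)\big]$. Applying the induction hypothesis to the function $y\mapsto f(xy)$ gives $g(x)=\rho^n\E[f(xY_n)]$. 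Conditioning on the first generation then yields
$$\E\Big[\sum_{|v|=n+1}f(X_v)\Big]=\E\Big[\sum_{k=1}^N g(\zeta_k)\Big]=\rho\,\E[g(\zeta)]=\rho^{n+1}\E\big[f(\zeta Y_n')\big],$$
where $Y'_n$ is independent of $\zeta\sim\mu$. Since $\zeta Y'_n$ has the law of $Y_{n+1}$, this closes the induction.

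The only delicate point is the rigorous justification of the conditioning step, namely the independence of the subtrees and increments given the first generation. This follows directly from the i.i.d.\ construction of the offspring numbers and of $(\zeta_v)$. Interchanging the random sum with the expectation is handled by Tonelli's theorem in the nonnegative case.
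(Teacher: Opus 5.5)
Your proof is correct, but it takes a different route from the paper. The paper does not use induction. It conditions on the genealogy up to generation $n$, which is independent of the increments $(\zeta_v)$. For each fixed $v$ with $|v|=n$, the product $\zeta_{v_1}\cdots\zeta_{v}$ consists of $n$ i.i.d.\ $\mu$-distributed increments, so it has the law of $Y_n$. Hence the conditional expectation is $|\GWT_n|\,\E[f(Y_n)]$, and the standard fact $\E|\GWT_n|=\rho^n$ finishes the argument. That route is shorter because it separates the genealogy from the spatial motion completely, which works here precisely because displacements are independent of offspring numbers. Your first-generation decomposition instead re-derives $\E|\GWT_n|=\rho^n$ and the law of the ancestral path together. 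The cost is an appeal to the branching property: the subtrees rooted at the children are i.i.d.\ given the first generation. In exchange, your argument still works in more general models where the children's displacements may depend on how many children there are. There one gets $\rho^n\E[f(S_n)]$ for a walk $S_n$ whose step law is the normalized first-generation intensity measure, and the paper's conditioning argument no longer applies directly. One small remark: splitting $f=f^+-f^-$ with the proviso ``whenever at least one side is finite'' is unnecessary. Since $|X_v|\le Kn$ for $|v|=n$ and balls in $G$ are finite, $f$ is effectively bounded on the relevant set, so all expectations are finite once $\rho<\infty$.
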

\begin{proof}
Conditioning on the $\sigma$-algebra  $\mathcal{F}_n$ generated by all branching particles up to generation $n$
$$\E\Big[\sum_{|v|=n}f(X_v)\,\big|\,\mathcal{F}_n\Big] = \sum_{|v|=n}\E[f(X_v)|\mathcal{F}_n] = |\GWT_n|\cdot\E[f(Y_n)].$$
Taking the expectation yields the claim.
\end{proof}

\section{Large deviation estimates}\label{sec:large-dev}

By \cite{largedev}, the sequence $\big(\frac{|Y_n|}{n}\big)_{n \in \mathbb{N}}$ satisfies a large deviation principle with rate function $I$, whose effective domain $D_I$ is a bounded interval (see Lemma \ref{lem:rate-beh}). In this section, we apply large deviation techniques to estimate the first exit time $T_n$ from the ball of radius $n$ in $G$, which is defined as
$$
T_n := \inf\{k \in \mathbb{N} : |Y_k| \ge n\}.
$$
We begin by studying the asymptotic behavior of the probability that $T_n$ is unusually small. Heuristically, a rapid exit from the ball of radius $n$ means the word length grows abnormally fast; the probability of this event can be quantified via the rate function $I$.
\begin{lemma}\label{lemm:ldp-tn}
Assuming \ref{A3}, for every $a \in [\ell, \beta)$, we have 
$$\lim_{n\to\infty}\frac{1}{n}\log\Prob\Big(T_n\leq \frac{n}{a}\Big) = -\frac{I(a)}{a}.$$
\end{lemma}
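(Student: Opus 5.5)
The proof splits into a lower bound on $\Prob(T_n\le n/a)$ and a matching upper bound. In both directions we compare the event $\{T_n\le n/a\}$ with the events $\{|Y_k|\ge n\}$ for the various times $k\le n/a$, and then use the large deviation principle for $|Y_n|/n$ together with the monotonicity of $I(x)/x$ from Lemma \ref{lem:rate-beh}, \ref{I4}.

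\emph{Lower bound.} Set $m=\lfloor n/a\rfloor$. Then $\{|Y_m|\ge n\}\subseteq\{T_n\le n/a\}$, so it suffices to bound $\Prob(|Y_m|\ge n)$ from below. Fix $a'\in(a,\beta)$. For large $n$ we have $n\le a'm$, hence
$$\Prob(|Y_m|\ge n)\ge \Prob(|Y_m|/m\in (a',\beta)).$$
The LDP lower bound with the open set $(a',\beta)$, combined with the monotonicity \ref{I3} and the continuity of $I$ from \ref{I1}, gives
$$\liminf_{m}\frac1m\log\Prob(|Y_m|/m\in(a',\beta))\ge -I(a'+)=-I(a').$$
Since $m\sim n/a$, this yields
$$\liminf_n \frac1n\log\Prob(T_n\le n/a)\ge -\frac{I(a')}{a}.$$
Letting $a'\downarrow a$ and using continuity of $I$ at $a$ (note $a<\beta$) gives the bound $-I(a)/a$.

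\emph{Upper bound.} A union bound gives
$$\Prob(T_n\le n/a)\le\sum_{k=\lceil n/K\rceil}^{\lfloor n/a\rfloor}\Prob(|Y_k|\ge n).$$
Times $k<n/K$ contribute nothing, because $|Y_k|\le Kk$. There are at most $n$ terms, so it suffices to show that
$$\max_{k\le n/a}\Prob(|Y_k|\ge n)\le e^{-n(I(a)/a-\varepsilon)}$$
for large $n$. For a uniform-in-$k$ estimate I would use the Chernoff bound through subadditivity rather than the asymptotic LDP. The sequence $\log\E e^{t|Y_k|}$ is subadditive, because $|xy|\le|x|+|y|$ and the increments are i.i.d. Hence $\log\E e^{t|Y_k|}\ge k\Lambda(t)$, but this inequality points the wrong way for a Chernoff bound. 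To get around this, fix a block length $p$ and split $k=qp+s$ with $0\le s<p$. Then
$$|Y_k|\le\sum_{i<q}|Y^{(p)}_i|+Ks,$$
so for $t\ge0$,
$$\E e^{t|Y_k|}\le e^{tKp}\,e^{q\,p\Lambda_p(t)}.$$
Since $\Lambda_p(t)\to\Lambda(t)$, for large $p$ this gives
$$\Prob(|Y_k|\ge n)\le \exp\bigl(-tn+k(\Lambda(t)+\delta)+C_p\bigr).$$
For $k\le n/a$ and $t\ge0$ with $\Lambda(t)+\delta\ge0$, the exponent is at most
$$-\frac{n}{a}\bigl(ta-\Lambda(t)-\delta\bigr)+C_p.$$
The condition $\Lambda(t)\ge0$ holds for $t\ge0$ because $\Lambda(0)=0$ and $\Lambda$ is nondecreasing. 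Optimizing over $t\ge0$ gives $-\frac{n}{a}(I(a)-\delta)$. Here the supremum in \eqref{eq:Legendre_transform} may be restricted to $t\ge0$ because $a\ge\ell$: for $t<0$ one has $\Lambda(t)\ge t\ell$ by Jensen, so $ta-\Lambda(t)\le t(a-\ell)\le0$. Letting $\delta\to0$ completes the upper bound.

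The main obstacle is this uniformity over the times $k\le n/a$. Monotonicity of $I(x)/x$ guarantees that the cheapest way to reach distance $n$ is at the latest allowed time. The pointwise LDP does not control all $k$ simultaneously, however, so the block/Chernoff argument above is needed. The key point there is that for $t\ge0$ the bound $e^{k\Lambda(t)}$ is increasing in $k$, which turns the sum into its largest term up to a factor $n$.
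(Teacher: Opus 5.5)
Your proof is correct and follows essentially the same route as the paper. The lower bound comes from $\{|Y_{\lfloor n/a\rfloor}|\ge n\}\subseteq\{T_n\le n/a\}$ plus the LDP, and the upper bound from a union bound over $k\le n/a$, an exponential Chebyshev bound with $t\ge 0$, and $\Lambda(t)\ge 0$ to replace $k$ by $n/a$; your block decomposition only plays the role of the paper's subexponential factor $\psi(k,t)$, which could just as well come directly from $\Lambda_k(t)\to\Lambda(t)$, and optimizing over $t\ge 0$ after taking $\limsup_n$ (with the Jensen bound $\Lambda(t)\ge t\ell$) spares you the paper's assertion that the supremum is attained at some $t_*>0$.
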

\begin{proof}
The lower bound is a direct consequence of the fact that for a fixed $a \in [\ell, \beta)$ the event $T_n \le n/a$ is implied by $|Y_{\lfloor n/a \rfloor}| \ge n$. For the upper bound note that
$$
\Big\{T_n\leq\frac{n}{a}\Big\} = \bigcup_{k=0}^{\lceil n/a\rceil}\{T_n = k\}\subset \bigcup_{k=0}^{\lceil n/a\rceil}\{|Y_k|\geq n\},
$$
since $T_n = k$ implies $|Y_k|\geq n$. Recall the definition of $\Lambda$ from \eqref{eq:lim_log_moment}. This definition implies that for every $t>0$, there exists a sequence $(\psi(k,t))_k$ of at most subexponential growth such that for all $k\in\N$ it holds
$$\E\big[\e^{t|Y_k|}\big]\leq \psi(k,t)\e^{\Lambda(t)k}.$$
Since $\Lambda$ is convex, for any $a\in (\ell,\beta)$ the supremum in \eqref{eq:Legendre_transform} (with $x$ replaced by $a$) must be achieved at some  $t_*>0$. Setting $\psi(k):=\psi(k,t_*)$, a union bound together with Markov inequality yields
\begin{align*}
\Prob\Big(T_n\leq \frac{n}{a}\Big)&\leq \sum_{k=0}^{\lceil n/a \rceil} \Prob(|Y_k|\geq n)\leq \sum_{k=0}^{\lceil n/a \rceil} \frac{\psi(k)\e^{\Lambda(t_*)k}}{\e^{t_*n}} \leq \frac{\e^{\Lambda(t_*)(n/a+1)}}{\e^{t_* n}}\sum_{k=0}^{\lceil n/a\rceil}\psi(k)\\
&= \e^{-n(t_* a-\Lambda(t_*))/a}\sum_{k=0}^{\lceil n/a\rceil}\e^{\Lambda(t_*)}\psi(k) = \e^{-nI(a)/a}\sum_{k=0}^{\lceil n/a\rceil}\e^{\Lambda(t_*)}\psi(k).
\end{align*}
Taking logarithm and dividing by $n$ yields the desired upper bound, since the partial sum of a subexponentially growing sequence grows at most subexponentially.
\end{proof}
For constructing the multitype branching process in Section \ref{sec:multi-typ}, we also need an estimate on the probability that the random walk $(Y_n)$ exits the ball of radius $n$ without ever leaving the cone $C(i)$, for each $i \in \{1,2,\dots,r\}$. Define the event $E_{n,i}$ of staying in $C(i)$ up to the first exit from $\mathcal{B}_n$ by
$$
E_{n,i} := \{\forall\, k \in \{1,2,\dots,T_n\}: \ Y_k \in C(i)\}.
$$
Recall the definition of $C(i)$ from \eqref{eq: cone def}, as well as the definition of $G_i$ as the $i$-th factor in the free product $G = G_1*G_2*\cdots*G_r$.
\begin{lemma}\label{lem: help large dev}
Let $a \in [\ell, \beta)$ and $i \in \{1,2,\dots,r\}$. Under assumption \ref{A3}, we have
$$
\lim_{n \to \infty} \frac{1}{n} \log \mathbb{P}\Big(T_n \le \frac{n}{a},\, E_{n,i}\Big) = -\frac{I(a)}{a}.
$$
\end{lemma}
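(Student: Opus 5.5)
The upper bound is immediate from Lemma \ref{lemm:ldp-tn}: since $\{T_n\le n/a\}\cap E_{n,i}\subseteq\{T_n\le n/a\}$, we get
$$\limsup_{n\to\infty}\frac1n\log\Prob\Big(T_n\le \frac na,\,E_{n,i}\Big)\le -\frac{I(a)}{a}.$$
All the work is in the matching lower bound. The plan is to transfer mass from the unrestricted event $\{T_n\le n/a\}$ into the cone $C(i)$ by a symmetry/relabelling argument on the free product, losing only a constant or subexponential factor.

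The key structural fact is that a path staying away from the identity lives in one cone. Let $\tau$ be the last time before $T_n$ at which $|Y_k|$ takes some small value, or more simply the last visit to $e$ before $T_n$. After $\tau$, the walk never returns to $e$. Since $Y$ is nearest-neighbour type (each step multiplies by an element of a single factor $G_k$), the first letter $Y_k^{(1)}$ of $Y_k$ stays in a fixed factor $G_j$ on the whole interval $(\tau,T_n]$; this is the tree-like structure of $\mathrm{Cay}(G,S)$ with $e$ as a cut vertex. Hence on $\{T_n\le n/a\}$ the post-$\tau$ excursion lies in $C(i')$ for some $i'\neq j$.

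Rather than decomposing at $\tau$, which costs control of $\tau$'s location, I would use a cleaner concatenation. First fix a finite word $g\in C(i)$ with first letter not in $G_i$ and $|g|=m$ large but fixed, reachable by the walk in $m_0$ steps with probability $p_0>0$ while staying in $C(i)$. This uses \ref{A3}: choose a factor $G_k$, $k\ne i$, then alternate factors. Then I run the walk from $g$ and ask that it exit $\mathcal B_{n+m}$ within $n/a$ steps while never returning to the ``root part'' $\{h: h\not\ge g\}$. Inside the subtree of $g$ the walk looks like a walk started at $e$ conditioned not to step into the factor of the last letter of $g$ at the root, i.e.\ a walk restricted to a cone $C(s(g))$ type event. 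This shows that the quantities $q_j(n):=\Prob(T_n\le n/a,\,E_{n,j})$ satisfy
$$q_i(n+m_0)\ge p_0\, q_{j}(n)\quad\text{for every } j\ne i,$$
up to adjustments of $a$ by $o(1)$ (handled by continuity of $I$ from \ref{I1}).

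It then suffices to show $\max_j q_j(n)$ has exponential rate $-I(a)/a$. On $\{T_n\le n/a\}$, decompose at the last visit $\tau$ to $e$. By the strong Markov property, and since $\tau\le n/a$ takes polynomially many values,
$$\Prob(T_n\le n/a)\le \sum_{t\le n/a}\sum_{j}\Prob\big(T'_n\le n/a-t,\ E'_{n,j}\big)\le (n/a+1)\,r\,\max_j q_j(n),$$
where primes denote the shifted walk after time $t$ (using $Y_\tau=e$ and that not returning to $e$ forces staying in one cone). Combining this with Lemma \ref{lemm:ldp-tn} gives $\liminf\frac1n\log\max_j q_j(n)\ge -I(a)/a$. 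Then the comparison inequality gives the same for each fixed $i$, because $r\ge2$ means every $i$ has some $j\neq i$, and for $j=i$ we can chain twice through a third index or use two transfers.

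The main obstacle I anticipate is making the transfer inequality $q_i\gtrsim q_j$ rigorous. The cone $C(j)$ viewed from $g$ is not exactly isomorphic to a cone from $e$, and ``exit radius $n$'' shifts by $m$, so the time budget $n/a$ becomes $(n+m)/a'$. I would handle this by left-multiplication invariance: $gY$ is a walk from $g$, and if $Y\in C(s(g))$ throughout, then $|gY_k|=|g|+|Y_k|$. This gives an exact embedding with the radius shifted by $m$, and the resulting $O(1)$ losses vanish in the exponential rate.
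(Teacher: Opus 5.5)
Your proof is correct in substance, but it reaches the statement by a different route than the paper. The paper also cuts at a last-exit time, but it uses the last visit before $T_n$ to the whole finite subgroup $G_i$, not to $e$. Each translated cone $xC(i)$, $x\in G_i$, can only be entered or left through $x$. So after the last visit to $G_i$, say at $x$, the path stays in $xC(i)$ and satisfies $|Y_{\tau+s}|=|x|+|x^{-1}Y_{\tau+s}|$. A union bound over $\tau\le n/a$ and $x\in G_i$ then gives directly $\Prob(T_n\le \frac{n-d}{a})\le \#G_i\,(\frac na+2)\,\Prob(T_{n-d}\le\frac{n-d}{a},\,E_{n-d,i})$, where $d$ is the diameter of $G_i$. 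The radius loss $d$ is absorbed by continuity of $I$.

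Cutting at the single vertex $e$, as you do, only confines the excursion to the branch of some factor $G_k$. It therefore controls only $\max_j q_j(n)$, and you need a second step, the transfer through a fixed prefix $g$, to reach the prescribed type $i$. The paper's choice of separating set turns the lemma into one union bound. Your argument is longer but more modular: it uses only that $e$ is a cut vertex and that any type can be reached from any other by a fixed finite path of positive probability.

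When you write the transfer out, several points need care.
\begin{itemize}
\item The radius shifts by $|g|$, not by $m_0$, while the time shifts by the number $m_0$ of prefix steps. What you actually get is $q_i(n+|g|;a)\ge p_0\,q_j(n;a')$ for any fixed $a'>a$ and all large $n$.
\item Your prefix $g=g_j$, and hence $|g_j|$, depends on $j$, while the maximising index in your second step depends on $n$. To compare everything at a common radius you also need the monotonicity $q_j(N-c;a')\ge q_j(N;a'')$ for $a''>a'$ and large $N$. This follows from $T_{N-c}\le T_N$ and $E_{N,j}\subseteq E_{N-c,j}$. Then let $a''\downarrow a$ inside $[\ell,\beta)$.
\item The case $j=i$ needs no chaining. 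It is trivial when the maximiser is $i$, and in any case you may take $g$ whose last letter lies in $G_i$.
\item Since $\tau$ is a last-exit time and not a stopping time, the decomposition uses the ordinary Markov property at each fixed $t$, not the strong Markov property. Your sum over $t$ already does exactly this.
\end{itemize}
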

\begin{proof}
Clearly, it suffices to prove that 
$$
\liminf_{n\to\infty}\frac{1}{n}\log\Prob\Big(T_n\leq \frac{n}{a},E_{n,i}\Big)\geq -\frac{I(a)}{a},
$$
because omitting $E_{n,i}$ only increases the probability, and then together with Lemma \ref{lemm:ldp-tn} we obtain the claim.
Fix $i \in \{1,2,\dots,r\}$ and let $d = \max\{|x| : x \in G_i\}$ denote the diameter of $G_i$. We first show 
\begin{equation}\label{eq:1inequality-lemma}
\Prob\Big(T_n\leq\frac{n-d}{a}\Big)\leq\#G_i\cdot\Big(\frac{n}{a}+2\Big)\cdot\Prob\Big(T_{n-d}\leq\frac{n-d}{a},\ \forall k\in \{1,2,\dots,T_{n-d}\}:Y_k\in C(i)\Big).
\end{equation}
Let $\tau$ be the last time the walk $(Y_n)$ visits $G_i$ before its first exit from $\mathcal{B}_n$. Here $G_i$ is viewed as a finite subgroup of $G$: it consists of all words of length 1 whose sole letter lies in $G_i$ (i.e., the copy of $G_i$ attached to the origin in the Cayley graph of $G$). Then we have
$$
\Big\{T_n \le \frac{n - d}{a}\Big\}
=
\bigcup_{k=0}^{\lfloor n/a \rfloor} \ \bigcup_{x \in G_i}
\Big\{T_n \le \frac{n - d}{a},\ \tau = k,\ Y_\tau = x\Big\}.
$$ 
After leaving $G_i$ for the last time (at time $k$) before leaving $\mathcal{B}_n$ the process $(Y_k^{-1}Y_{j+k})_{0\leq j \leq T_n}$ is trapped inside the cone $C(i)$, and so using the Markov property we have
\begin{align*}
\Prob\Big(T_n\leq\frac{n-d}{a},\ \tau = k,\ Y_\tau = x\Big) &\leq \Prob\Big(T_{n-|x|}\leq \frac{n-d}{a}-k,\ \forall l\in\{1,2,\dots,T_{n-|x|}\}:Y_l\in C(i)\Big)\\
& \leq \Prob\Big(T_{n-d}\leq \frac{n-d}{a},\ \forall k\in\{1,2,\dots,T_{n-d}\}:Y_k\in C(i)\Big).
\end{align*}
In the first inequality above, we use the fact that if $(Y_n)_n$ visits $G_i$ at time $k$, on site $x$, for the last time before leaving $\mathcal{B}_n$, then $T_n$ is equal to $k$ plus the first time at which the restarted process $(Y_k^{-1}Y_{k+j})_{j\geq 0}$ exits $\mathcal{B}_n$. Applying the union bound to both sums above completes the proof of \eqref{eq:1inequality-lemma}. To finish the proof of the lemma, it remains to show that
$$
\lim_{n \to \infty} \frac{1}{n} \log \mathbb{P}\Big(T_n \le \frac{n - d}{a}\Big) = -\frac{I(a)}{a}.
$$
For any $\delta > 0$ and all sufficiently large $n$, we have $\frac{n}{a+\delta} \le \frac{n - d}{a}$. Then
$$-\frac{I(a+\delta)}{a+\delta}\leq\liminf_{n\to\infty}\frac{1}{n}\log\Prob\Big(T_n\leq\frac{n-d}{a}\Big).$$
Continuity of $I$ on $D_I$ together with the  matching upper bound finishes the proof.
\end{proof}
Next we bound the probability of the number of branching particles that move quickly, but not excessively so.
\begin{lemma}\label{lem: LDP stoping time}
Let $a \in [\ell, \beta)$, $i \in \{1,2,\dots,r\}$, and choose $\varepsilon > 0$ with $a + \varepsilon < \beta$. Under assumption \ref{A3}, we have
$$
\lim_{n \to \infty} \frac{1}{n} \log \mathbb{P}\Big(\frac{n}{a+\varepsilon} < T_n \le \frac{n}{a},\ E_{n,i}\Big) = -\frac{I(a)}{a}.
$$
\end{lemma}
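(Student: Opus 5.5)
The plan is to write the event as a difference of two events already controlled by Lemma \ref{lem: help large dev}, and to show that the subtracted part is exponentially negligible. The upper bound is immediate: the event is contained in $\{T_n\le n/a,\ E_{n,i}\}$. By Lemma \ref{lem: help large dev} (or just Lemma \ref{lemm:ldp-tn}),
$$\limsup_{n\to\infty}\frac1n\log\Prob\Big(\frac{n}{a+\varepsilon}<T_n\le\frac na,\ E_{n,i}\Big)\le-\frac{I(a)}{a}.$$

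For the lower bound, write
$$\Prob\Big(\frac{n}{a+\varepsilon}<T_n\le\frac na,\ E_{n,i}\Big)\ \ge\ \Prob\Big(T_n\le\frac na,\ E_{n,i}\Big)-\Prob\Big(T_n\le\frac{n}{a+\varepsilon}\Big).$$
Since $a+\varepsilon\in(\ell,\beta)$, Lemma \ref{lemm:ldp-tn} applies to the second term and gives
$$\frac1n\log\Prob\Big(T_n\le\frac{n}{a+\varepsilon}\Big)\to-\frac{I(a+\varepsilon)}{a+\varepsilon}.$$
By Lemma \ref{lem:rate-beh} \ref{I4}, the map $x\mapsto I(x)/x$ is strictly increasing on $[\ell,\beta)$, so
$$\frac{I(a+\varepsilon)}{a+\varepsilon}>\frac{I(a)}{a}.$$
The first term has exponential rate $-I(a)/a$ by Lemma \ref{lem: help large dev}, so the subtracted term is smaller by an exponential factor. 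Concretely, fix $\eta>0$ with $2\eta<\frac{I(a+\varepsilon)}{a+\varepsilon}-\frac{I(a)}{a}$. For all large $n$ the first term is at least $e^{-n(I(a)/a+\eta)}$ and the second is at most $e^{-n(I(a+\varepsilon)/(a+\varepsilon)-\eta)}$. Hence the difference is at least $\tfrac12 e^{-n(I(a)/a+\eta)}$. Letting $\eta\downarrow0$ then gives
$$\liminf_{n\to\infty}\frac1n\log\Prob\Big(\frac{n}{a+\varepsilon}<T_n\le\frac na,\ E_{n,i}\Big)\ge-\frac{I(a)}{a}.$$

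The only delicate point is the strict gap between the two rates, which is exactly \ref{I4}. The case $a=\ell$ needs a separate check, since there $I(\ell)/\ell=0$. Strictness still holds in that case because $I(a+\varepsilon)>0$ by \ref{I3}, and Lemma \ref{lem: help large dev} is stated for $a=\ell$, so the same argument goes through unchanged. No further obstacle is expected beyond bookkeeping the $\lfloor\cdot\rfloor$ conventions in $T_n\le n/a$, which the earlier lemmas already absorb.
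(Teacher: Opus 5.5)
Your proof is correct and follows essentially the same route as the paper. Both write the event as $\{T_n\le n/a,\ E_{n,i}\}$ minus the faster event $\{T_n\le n/(a+\varepsilon)\}$, and both use the strict monotonicity of $I(x)/x$ from Lemma~\ref{lem:rate-beh}~\ref{I4} to show the subtracted part is exponentially negligible. The only cosmetic differences are that you drop $E_{n,i}$ from the subtracted term and bound it with Lemma~\ref{lemm:ldp-tn} rather than Lemma~\ref{lem: help large dev}, and that you make the paper's step $\frac1n\log(1-\text{ratio})\to 0$ explicit through the $\eta$-argument.
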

\begin{proof}
Let $a\in[\ell,\beta)$ and $i\in\{1,2,\dots,r\}$.
Lemma \ref{lem: help large dev} together with Lemma \ref{lem:rate-beh}\ref{I4} yield
$$
\lim_{n\to\infty}\frac{1}{n}\log\frac{\Prob\Big( T_n\leq\frac{n}{a+\varepsilon},\ E_{n,i}\Big)}{\Prob\Big( T_n\leq\frac{n}{a},\ E_{n,i}\Big)} = \frac{I(a)}{a}-\frac{I(a+\varepsilon)}{a+\varepsilon}<0,
$$
which implies that
\begin{equation}\label{eq: Lemma 4, 1}
\lim_{n\to\infty}\frac{\Prob\Big( T_n\leq\frac{n}{a+\varepsilon},\ E_{n,i}\Big)}{\Prob\Big( T_n\leq\frac{n}{a},\ E_{n,i}\Big)} = 0.
\end{equation}
Thus
\begin{align*}
\lim_{n\to\infty}\frac{1}{n}\log\Prob\Big(\frac{n}{a+\varepsilon}< & T_n\leq\frac{n}{a},\ E_{n,i}\Big)
=\lim_{n\to\infty}\frac{1}{n}\log\left(\Prob\Big( T_n\leq\frac{n}{a},\ E_{n,i}\Big)-\Prob\Big( T_n\leq\frac{n}{a+\varepsilon},\ E_{n,i} \Big)\right)\\
&=\lim_{n\to\infty}\frac{1}{n}\log\Prob\Big( T_n\leq\frac{n}{a},\ E_{n,i}\Big)+\lim_{n\to\infty}\frac{1}{n}\log\Bigg(1-\frac{\Prob\Big( T_n\leq\frac{n}{a+\varepsilon},\ E_{n,i}\Big)}{\Prob\Big( T_n\leq\frac{n}{a},\ E_{n,i}\Big)}\Bigg)\\
&=\lim_{n\to\infty}\frac{1}{n}\log\Prob\Big(T_n\leq\frac{n}{a},\ E_{n,i}\Big) = -\frac{I(a)}{a}.
\end{align*}
\end{proof}

\section{Maximal displacement for BRW}\label{sec:main}

We now begin the proof of Theorem \ref{thm:main}. As noted, the upper bound follows standard methods, whereas the lower bound requires a different approach. We first prove the independence of the starting position for the maximal displacement  of the branching random walk.

\begin{lemma}\label{lem: 0-1-liminf}
For  $x\in G$, it holds
$$\liminf_{n\to\infty}\max_{|v|=n}\frac{|X_v^x|}{n} = \liminf_{n\to\infty}\max_{|v|=n}\frac{|X_v|}{n}, \quad \text{almost surely}.$$
\end{lemma}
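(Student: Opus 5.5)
The natural coupling is to start both processes from the same tree and the same increments: for every $v\in\GWT$ we have $X_v^x = x\,\zeta_{v_1}\zeta_{v_2}\cdots\zeta_v = x X_v$. With this, the claim becomes a deterministic statement valid for every realization. The statement itself only concerns laws, since each side is a random variable defined on possibly different branching random walks. Because $\liminf_n \max_{|v|=n}|X_v^x|/n$ is a measurable function of $(\GWT,(\zeta_v))$, proving the almost sure identity under this coupling is enough. It also yields equality in law, which is the meaningful content when the two processes are not coupled.

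The key step is a pointwise comparison of word lengths. The word metric $d(g,h)=|g^{-1}h|$ is left-invariant, so the triangle inequality gives, for all $g\in G$,
$$|g|-|x| \le |xg| \le |x|+|g|.$$
Indeed, $|xg| = d(e,xg)\le d(e,x)+d(x,xg) = |x|+|g|$. In the other direction, $|g| = |x^{-1}(xg)| \le |x^{-1}|+|xg| = |x|+|xg|$, using the symmetry of $S$, which gives $|x^{-1}|=|x|$. Applying this with $g=X_v$ and taking the maximum over $|v|=n$ yields
$$\max_{|v|=n}|X_v| - |x| \le \max_{|v|=n}|X_v^x| \le \max_{|v|=n}|X_v| + |x|.$$
Assumption \ref{A2} ensures that $\GWT_n\neq\emptyset$, so the maxima are over nonempty finite sets and are finite.

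Dividing by $n$, the correction term $|x|/n$ tends to $0$ because $x$ is fixed. Taking $\liminf$ on all three sides then gives equality of the two $\liminf$s for every realization, hence almost surely. I do not expect a genuine obstacle here. The only point needing care is the interpretation of ``almost surely'' across two processes, which the coupling $X_v^x=xX_v$ resolves. If one insists on independent copies, the conclusion is equality in distribution of the two limits. This is what the subsequent 0--1 argument uses, presumably to show that the $\liminf$ is almost surely constant.
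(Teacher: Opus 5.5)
Your proof is correct and is essentially the paper's argument. The paper also implicitly uses the coupling $X_v^x = xX_v$ and deduces the claim directly from $|X_v|-|x|\le |xX_v|\le |x|+|X_v|$ for $v\in\GWT_n$; your remarks on the coupling, on $|x^{-1}|=|x|$, and on the nonemptiness of $\GWT_n$ only make explicit what the paper leaves tacit.
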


\begin{proof}
This is a direct consequence of the inequalities 
\[
|X_v|-|x|\leq |xX_v| \leq |x|+|X_v|, 
\]
where $v\in\GWT_n$ for some $n\in\N$.
\end{proof}

\subsection{$0$-$1$ law for the maximal displacement}\label{sec:0-1}

We establish a $0$-$1$ law for the maximal displacement, which enables us to lift the results of Subsection \ref{sec:lower} to an almost sure lower bound.

\begin{lemma}\label{lem: help 1}
Under assumptions \ref{A1} and \ref{A3}, for any $a\geq 0$ it holds
$$\Prob\Big(\liminf_{n\to\infty}\max_{|v|=n}\frac{|X_v|}{n}\geq a\Big)\in\lbrace 0,1\rbrace.$$
\end{lemma}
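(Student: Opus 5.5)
Let $A$ denote the event $\{\liminf_{n\to\infty}\max_{|v|=n}|X_v|/n\geq a\}$ and $q:=\Prob(A)$. The plan is the standard branching-process argument: decompose at the first generation (or any fixed generation $k$) and use Lemma \ref{lem: 0-1-liminf} to remove the dependence on the starting point. Then show that $1-q$ is a fixed point of the generating function of $\pi$ in $[0,1)$, or equal to $1$.

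First I check that $A$ is determined by the subtrees. For each child $u$ of the root (with $|u|=1$), consider the subtree $\GWT^{(u)}$ rooted at $u$. Along it, the branching random walk $(X_{uw})_{w\in\GWT^{(u)}}$ is a $\mathrm{BRW}(G,\pi,\mu)$ started at $X_u=\zeta_u$. Since $\max_{|v|=n}|X_v|=\max_{|u|=1}\max_{|w|=n-1}|X_{uw}|$ and $n/(n-1)\to1$, we get
\[
\liminf_{n\to\infty}\max_{|v|=n}\frac{|X_v|}{n}\;\geq\;\max_{|u|=1}\,\liminf_{n\to\infty}\max_{|w|=n}\frac{|X^{(u)}_{w}|}{n},
\]
where $X^{(u)}$ denotes the BRW in the subtree started from $X_u$. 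By Lemma \ref{lem: 0-1-liminf}, the right-hand quantities have the same law as for the BRW started at $e$, and they are independent given $\zeta_u$ and the number of children. Hence $A^c\subseteq\bigcap_{|u|=1}A_u^c$, where the events $A_u$ are i.i.d. with probability $q$ and independent of $|\GWT_1|$. This gives
\[
1-q\;\leq\;\E\big[(1-q)^{|\GWT_1|}\big]=f(1-q),
\]
with $f$ the generating function of $\pi$. Iterating over generation $k$ in the same way yields $1-q\le f_k(1-q)$ for the $k$-fold iterate $f_k$.

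Now suppose $q>0$, so $s:=1-q<1$. By \ref{A2} we have $f(0)=0$, and since $\rho>1$ we have $f(s)<s$ for all $s\in(0,1)$ (strict convexity when $\pi$ is not concentrated at 1, which \ref{A1} ensures). Thus $s\le f(s)$ forces $s=0$, i.e. $q=1$. Alternatively, without \ref{A2}, one uses $f_k(s)\to$ the extinction probability $=0$.

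The only delicate point is justifying that the subtree processes are genuinely i.i.d. copies of a BRW from a random start, and that Lemma \ref{lem: 0-1-liminf} holds pathwise. It does, since it comes from deterministic triangle inequalities applied to $X_{uw}=X_u\cdot(\text{walk from } e)$. With that, the shift by a bounded $|X_u|$ and the index shift $n\mapsto n-1$ do not affect the liminf.
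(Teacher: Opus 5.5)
Your proof is correct and is essentially the paper's argument. Both decompose at a fixed generation, use Lemma \ref{lem: 0-1-liminf} pathwise so that the subtree liminfs are i.i.d.\ copies independent of the early generations, and derive $1-q\le \E[(1-q)^{|\GWT_k|}]$; the paper then lets $k\to\infty$ using $|\GWT_k|\to\infty$, while you stay at $k=1$ and use $f(s)<s$ on $(0,1)$, and these are equivalent uses of \ref{A2} and $\rho>1$. Your inequality $\liminf\max\geq\max\liminf$ is the right statement (the paper writes an equality, which is not justified in general but is not needed), whereas your aside about dropping \ref{A2} is wrong: without it the extinction probability is positive, and the lemma can fail.
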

\begin{proof}
We consider the complementary event. Let $k\in\N$ be arbitrary, and for $u\in\GWT_k$ we write $\GWT_n^u$ for the set of descendants of $u$ at level $n$ in  $\GWT$. Since $\GWT_k$ is finite almost surely, we obtain
\begin{align*}
\liminf_{n\to\infty}\max_{|v|=n}\frac{|X_v|}{n}&=\max_{|u|=k}\Big(\liminf_{n\rightarrow\infty}\max_{v\in\GWT_n^u}\frac{|X_v|}{n}\Big)
=\max_{|u|=k}\Big(\liminf_{n\rightarrow\infty}\max_{v\in\GWT_n^u}\frac{|X_u^{-1}X_v|}{n-k}\Big),
\end{align*}
where above we have used Lemma \ref{lem: 0-1-liminf}. The sequence
$\Big(\liminf_{n\rightarrow\infty}\max_{v\in\GWT_n^u}\frac{|X_u^{-1}X_v|}{n-k}\Big)_{|u|=k}$
is a family of i.i.d.~random variables with the same distribution as
$\liminf_{n\rightarrow\infty}\max_{|v|=n}\frac{|X_v|}{n}$. If
$\mathcal{F}_k$ is the $\sigma$-algebra generated by the  first $k$ steps of the branching random walk, then
\begin{align*}
\mathbb{E}\bigg[\mathds{1}\Big\lbrace\liminf_{n\to\infty}\max_{|v|=n}\frac{|X_v|}{n}<a\Big\rbrace\Big|\mathcal{F}_k\bigg]&=\mathbb{E}\bigg[\mathds{1}\Big\lbrace\max_{|u|=k}\Big(\liminf_{n\rightarrow\infty}\max_{v\in\GWT_n^u}\frac{|X_u^{-1}X_v|}{n-k}\Big)<a\Big\rbrace\Big|\mathcal{F}_k\bigg]\\
&=\prod_{|u|=k}\Prob\Big(\liminf_{n\rightarrow\infty}\max_{v\in\GWT_n^u}\frac{|X_u^{-1}X_v|}{n-k}<a\Big)\\
&=\Prob\Big(\liminf_{n\to\infty}\max_{|v|=n}\frac{|X_v|}{n} < a\Big)^{|\GWT_k|}.
\end{align*}
In view of \ref{A1} and \ref{A3}, the branching process survives almost surely, so
$$\lim_{k\rightarrow\infty}\mathbb{E}\bigg[\Prob\Big(\liminf_{n\to\infty}\max_{|v|=n}\frac{|X_v|}{n} < a\Big)^{|\GWT_k|}\bigg]\in\{0,1\},$$
from which the claim follows.
\end{proof}
The Kolmogorov $0$-$1$ law is not applicable here: the "the tail" event inside the probability is typically not independent of the $\sigma$ algebra generated by $\mathcal F_v$. This is true if the underlying branching tree is deterministic.

\subsection{Upper bound}\label{sec:upper}

We derive an upper bound on the maximal displacement for the branching random walk on $G$. Using the many-to-one formula together with large deviation estimates for the underlying random walk $(Y_n)_{n\in\mathbb{N}}$ , we show that the probability a particle travels faster than $v_{\max}$ decays exponentially.
\begin{prop}\label{prop:upper}
Fix $a > v_{\max}$. Under Assumptions \ref{A1}–\ref{A3}, we have
$$\mathbb{P}\Big(\limsup_{n \to \infty} \max_{|v|=n} \frac{|X_v|}{n} \le a\Big) = 1.
$$
\end{prop}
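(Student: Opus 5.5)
The plan is a first-moment (many-to-one) bound combined with the large deviation upper bound and Borel--Cantelli. Fix $a>v_{\max}$. There are two cases.

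\emph{Case $a\ge \beta$.} Since $|Y_n|\le Kn$ almost surely and $I=\infty$ on $(\beta,\infty)$, the case $a>\beta$ is immediate: it suffices to note that $\max_{|v|=n}|X_v|\le Kn$ and, if needed, to replace $K$ by $\beta$. Rather than rely on this, I will treat all $a$ uniformly by the first-moment argument below. By the definition of $v_{\max}$ as a supremum, together with the monotonicity in Lemma \ref{lem:rate-beh}\ref{I3}, every $x\in[a,\infty)$ satisfies $I(x)>\log\rho$; for $x\notin D_I$ this holds trivially since $I(x)=\infty$.

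\emph{Key quantitative step.} Show that $\inf_{x\ge a}I(x)>\log\rho$. This uses the fact that $I$ is lower semicontinuous (it is a Legendre transform) and nondecreasing on $[\ell,\infty)$. Note also that $a>v_{\max}\ge \ell$, because $I(\ell)=0<\log\rho$. Hence the infimum over $[a,\infty)$ is attained at $a$ itself, or equals $\lim_{x\downarrow a}I(x)\ge I(a)$ if $a=\beta$ is an endpoint; it is $+\infty$ if $a>\beta$. Choose $\eta>0$ with $\inf_{x\ge a}I(x)\ge \log\rho+2\eta$. The LDP upper bound applied to the closed set $A=[a,\infty)$ then gives, for all large $n$,
$$\Prob(|Y_n|\ge an)\le \e^{-n(\log\rho+\eta)}.$$
The main subtlety is this case $a=\beta$, where $I(\beta)$ may be finite. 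There $\inf_{x\ge \beta}I(x)=I(\beta)$, and one needs $I(\beta)>\log\rho$. This follows because $\beta>v_{\max}$ forces $I(x)>\log\rho$ on $(v_{\max},\beta]$, using the definition of $v_{\max}$ with $\beta\in D_I$ if $I(\beta)<\infty$.

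\emph{Many-to-one and Borel--Cantelli.} By Lemma \ref{lem: many-to-one} with $f=\mathbb 1\{|\cdot|\ge an\}$ and Markov's inequality,
$$\Prob\Big(\max_{|v|=n}|X_v|\ge an\Big)\le \E\Big[\sum_{|v|=n}\mathbb 1\{|X_v|\ge an\}\Big]=\rho^n\Prob(|Y_n|\ge an)\le \e^{-\eta n}.$$
This bound is summable in $n$, so by Borel--Cantelli we have $\max_{|v|=n}|X_v|<an$ for all but finitely many $n$ almost surely. Hence
$$\limsup_{n\to\infty}\max_{|v|=n}\frac{|X_v|}{n}\le a$$
almost surely, which is the claim.
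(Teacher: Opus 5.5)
Your proof is correct and follows the paper's argument: you apply many-to-one to the number of generation-$n$ particles with $|X_v|\ge an$, then the LDP upper bound on the closed set $[a,\infty)$ using $\inf_{x\ge a}I(x)>\log\rho$ (which you justify more carefully than the paper, including the endpoint case $a=\beta$), then Markov's inequality and Borel--Cantelli. Your opening aside that the case $a>\beta$ is ``immediate'' is not justified, since $|X_v|\le K|v|$ only handles $a\ge K$ and $\beta$ can be strictly smaller than $K$; this is harmless because you explicitly do not rely on it.
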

\begin{proof}
Let $N_{a,n}$ denote the number of generation-$n$ particles that move at speed at least $a$, i.e.,
$$N_{a,n}:=\sum_{|v|=n}\mathds{1}\{|X_v|\geq na\}.$$
Lemma \ref{lem: many-to-one} gives 
$\E[N_{n,a}]=\rho^n\Prob(|Y_n|\geq na)$.
Since $a>v_{\max}$ it holds that $\inf_{x\geq a}I(x)>\log\rho$,
from which it follows 
$$\limsup_{n\rightarrow\infty}\frac{1}{n}\log\E[N_{n,a}]<0.$$
Hence there are constants $c, C > 0$ with $\mathbb{E}[N_{n,a}] \le C e^{-c n}$. By Markov’s inequality,
$$
\sum_{n=1}^\infty \mathbb{P}(N_{n,a} \ge 1)
\le \sum_{n=1}^\infty \mathbb{E}[N_{n,a}]
\le C \sum_{n=1}^\infty e^{-c n}
< \infty,
$$
and the Borel–Cantelli lemma yields the desired result.
\end{proof}

\subsection{Multitype branching process}\label{sec:multi-typ}

Recall that $v_{\max}$ is the largest intersection point of the rate function $I$ of the sequence $\big(\frac{|Y_n|}{n}\big)_{n\in\N}$ with the horizontal line $\log\rho$.
For this section, we fix some arbitrary $a\in[\ell,v_{\max})$. We show first a lower bound for the maximal displacement of the branching particles:
$$\Prob\bigg(\liminf_{n\to\infty}\max_{|v|=n}\frac{|X_v|}{n}\geq a\bigg)=1.$$
To this end, we construct a multitype branching process that consists of only those particles moving at speed greater than $a$, and then we show that this process survives with positive probability. Applying Lemma \ref{lem: help 1} then yields the required lower bound. The types are $\{1,2,\dots,r\}$, corresponding to the factors of the free product.
Fix a type $i$. Consider the branching random walk with offspring distribution $\pi$ started at the origin, and focus on particles that exit $\mathcal{B}_n$ quickly enough while staying within the cone $C(i)$. Among these fast particles, we classify types by the index of the group to which the last letter belongs. Let $Z_{ij}$ be the number of type-$j$ particles that leave $\mathcal{B}_n$ for the first time without exiting $C(i)$. This defines a multitype branching process with offspring matrix given by the random variables $(Z_{ij})_{i,j}$; see Figure \ref{fig:pdfimage} for a graphical representation.
By choosing $n \in \mathbb{N}$ sufficiently large, we ensure that this multitype process survives with positive probability, and the $0$-$1$ law completes the proof. We note that this approach is inspired by \cite{maxdisgalton}, where a branching process in a random environment is used instead of a multitype process, and we adopt a similar notation to \cite{maxdisgalton}. 
\begin{figure}
  \centering
  \includegraphics[width=0.55\linewidth]{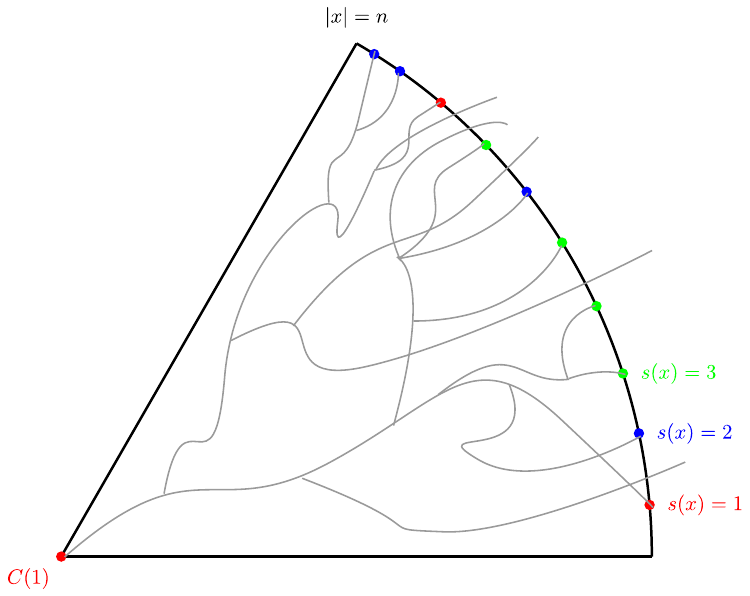}
  \caption{The multitype branching process on $\Z/2\Z*\Z/2\Z*\Z/2\Z$: red, blue, and green stand for $1$, $2$ and $3$, the first, second and third copy of $\Z/2\Z$, respectively. We consider all branching random walk particles started in $e$ and stopped after their first exit of $\mathcal{B}_n$. Then we keep only those particles that never left the cone $C(1)$, and reached $\mathcal{B}_n^c$ fast enough. We partition these particles according to their suffix: here we have $Z_{1,1} = 3$, $Z_{1,2} = 3$, and  $Z_{1,3} = 4$, and  $s(x)$ stands for the suffix of the marked displacement.}  
\label{fig:pdfimage}
\end{figure}

We now formalize the above heuristics.
Consider the branching random walk $(X_v)_{v \in \GWT}$ with offspring distribution $\pi$ and step distribution $\mu$. For $u \in \GWT$ and $n \in \mathbb{N}$, define the first time the ancestral line of $u$ exits $\mathcal{B}_n$ by
$$T_n^u := \inf\{1 \le k \le |u| : |X_{u_k}| \ge n\}.$$
We also define the stopping line of first exits from $\mathcal{B}_n$ as
$$\mathcal{L}(n) := \{u_{T_n^u} : u \in \GWT,\ T_n^u < \infty\},$$
where $u_{T_n^u}$ denotes the ancestor of $u$ in generation $T_n^u$.
Fix $i \in \{1,2,\dots,r\}$. Among the particles that hit $\mathcal{B}_{n}^c$ for the first time, we consider only those that exit quickly while staying inside the cone $C(i)$:
$$\mathcal{Z}_i(a,n):=
\Big\{u \in \mathcal{L}(n) :\ |u| \le \frac{n}{a},\ \forall\, k \in \{1,2,\dots,|u|\},\ X_{u_k} \in C(i)\Big\}.$$
We further distinguish the elements of $\mathcal{Z}_i(a,n)$ by type and, for each $j \in \{1,2,\dots,r\}$, set
$$\mathcal{Z}_{ij}(a,n) := \{u \in \mathcal{Z}_i(a,n) : s(X_u) = j\},$$
where $s(X_u)$ is the suffix of $X_u$, and denote the number of type-$j$ particles by
$$Z_{ij}(a,n) := \#\mathcal{Z}_{ij}(a,n).$$
We now consider an $r$-type branching process whose offspring distribution matrix is given by the random variables $Z_{ij}(a,n)$. Let $M(a,n) = \big(M_{ij}(a,n)\big)_{1 \le i,j \le r}$ denote its mean offspring matrix
$$ M_{ij}(a,n) := \mathbb{E}\big[ Z_{ij}(a,n) \big].$$
We denote by $(B^{m}(a,n))_{m \ge 0}$ the resulting multitype branching process with $r$ types and mean matrix $M(a,n)$. Our first goal is to show that, for $n \in \mathbb{N}$ sufficiently large, the spectral radius (largest eigenvalue) of $M(a,n)$ is strictly larger than $1$. This ensures that the process $(B^m(a,n))_{m \ge 0}$ survives with positive probability; see \cite{MR1991122} for more details. We will use the following simple fact.
\begin{lemma}\label{lem: helping perron-frob} If $A\in \R^{r\times r}$ is a positive matrix  with $\min_{1\leq i\leq r}\sum_{j=1}^r A_{ij} > 1$,
then the largest eigenvalue (Perron-Frobenius eigenvalue) of $A$ is bigger than $1$.
\end{lemma}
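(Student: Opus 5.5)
The plan is to combine the Perron--Frobenius theorem with the elementary bound that the spectral radius of a nonnegative matrix dominates its minimal row sum. Since $A$ has strictly positive entries, the Perron--Frobenius theorem gives a real eigenvalue $\lambda>0$ equal to the spectral radius of $A$. It also gives a corresponding left eigenvector $w$ with strictly positive entries, so that $w^{T}A=\lambda w^{T}$.

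First I will test the eigenvalue equation against the all-ones vector $\mathbf{1}=(1,\dots,1)^{T}$. Writing $s:=\min_{1\le i\le r}\sum_{j=1}^r A_{ij}$, we have $(A\mathbf 1)_i=\sum_j A_{ij}\ge s$ for every $i$. Multiplying on the left by $w^{T}$ and using $w_i>0$ gives
\[
\lambda \sum_{j=1}^r w_j=w^{T}A\mathbf 1=\sum_{i=1}^r w_i (A\mathbf 1)_i\ \ge\ s\sum_{i=1}^r w_i .
\]
Dividing by $\sum_i w_i>0$ yields $\lambda\ge s>1$, which is the claim.

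If one prefers to avoid left eigenvectors, there is an alternative route. By induction, $A^m\mathbf 1\ge s^m\mathbf 1$ entrywise, so $\|A^m\|_\infty\ge s^m$. Gelfand's formula $\lambda=\lim_m\|A^m\|_\infty^{1/m}$ then gives $\lambda\ge s$.

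There is no real obstacle. The only points needing care are that positivity of $A$ is what guarantees the Perron--Frobenius eigenvector has strictly positive entries, and that the Perron--Frobenius eigenvalue coincides with the spectral radius. Together these justify dividing by $\sum_i w_i$ and the identification of the "largest eigenvalue". In the later application to $M(a,n)$, strict positivity of the entries (or at least primitivity) will need to be checked separately.
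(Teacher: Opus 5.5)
Your main argument is correct and is essentially the paper's proof: the paper also takes the positive left Perron eigenvector $v$, normalized so that $\|v\|_1=1$, and computes $\lambda=\|vA\|_1=\sum_i v_i\sum_j A_{ij}>1$. This is exactly your pairing of $w^{T}A=\lambda w^{T}$ with the vector $\mathbf 1$. Your Gelfand-formula alternative is also valid, and it has the minor advantage of needing only that $A$ be nonnegative.
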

\begin{proof}
Due to the Perron-Frobenius theorem, the largest eigenvalue $\lambda$ of $A$ is positive and has a unique positive left eigenvector $v\in\R^{1\times r}$, such that 
$\Vert v \Vert_1 = \sum_{i=1}^r v_i = 1$. Then
\begin{align*}
\lambda = \Vert \lambda v\Vert_1 = \Vert vA \Vert_1 = \sum_{j=1}^r \sum_{i=1}^r v_iA_{ij} =\sum_{i=1}^r v_i \sum_{j=1}^r A_{ij} > \sum_{i=1}^r v_i = 1.
\end{align*}

\end{proof}
\begin{prop}\label{prop:perron}
Assuming \ref{A1}–\ref{A3}, there exists $n_0 \in \mathbb{N}$ such that for all $n \ge n_0$, the largest eigenvalue $\nu$ of $M(a,n)$ satisfies $\nu > 1$. Consequently, for each such $n$, the process $(B^m(a,n))_{m \ge 0}$ survives with positive probability.   
\end{prop}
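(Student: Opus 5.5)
We will show that each row sum of $M(a,n)$ exceeds $1$ for $n$ large, and that $M(a,n)$ is positive, so that Lemma \ref{lem: helping perron-frob} applies. Summing over $j$ gives $\sum_{j=1}^r M_{ij}(a,n)=\E[\#\mathcal{Z}_i(a,n)]$. We compute this expectation by a many-to-one argument along the stopping line $\mathcal{L}(n)$. A vertex $u$ with $|u|=k$ belongs to $\mathcal{Z}_i(a,n)$ exactly when its ancestral path first leaves $\mathcal{B}_n$ at time $k\le n/a$ and stays in $C(i)$ up to that time. This event depends only on the path $(X_{u_1},\dots,X_{u_k})$. Conditionally on $\GWT$, this path is distributed as $(Y_1,\dots,Y_k)$. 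Summing over $k$ and using $\E|\GWT_k|=\rho^k$ (the argument of Lemma \ref{lem: many-to-one} applied to path functionals), we get
$$\E[\#\mathcal{Z}_i(a,n)]=\sum_{k\le n/a}\rho^k\,\Prob(T_n=k,\,E_{n,i})\;\ge\;\rho^{n/(a+\varepsilon)}\,\Prob\Big(\tfrac{n}{a+\varepsilon}<T_n\le \tfrac{n}{a},\,E_{n,i}\Big).$$

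Next we bound the right-hand side using Lemma \ref{lem: LDP stoping time}. Its logarithm divided by $n$ converges to
$$\frac{\log\rho}{a+\varepsilon}-\frac{I(a)}{a}.$$
Since $\ell\le a<v_{\max}$ and $I$ is strictly increasing on $[\ell,\beta)$ by Lemma \ref{lem:rate-beh}\ref{I3}, we have $I(a)<\log\rho$. Indeed, $I(v_{\max})\le\log\rho$ by continuity \ref{I1}; we should double-check that $v_{\max}<\beta$ or handle the endpoint, but strict monotonicity below $v_{\max}$ suffices. Hence $I(a)/a<\log\rho/a$. We then choose $\varepsilon>0$ small, with $a+\varepsilon<\beta$, such that $\log\rho/(a+\varepsilon)>I(a)/a$. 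With this choice the limit is strictly positive, so each row sum grows exponentially in $n$. In particular, for all $n\ge n_0$ and every $i$, the row sum exceeds $1$. Since there are finitely many $i$, $n_0$ can be taken uniform in $i$. If $a=\ell$ and $I(a)=0$, the same computation works trivially.

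The remaining issue, which I expect to be the main obstacle, is positivity of $M(a,n)$, i.e.\ $M_{ij}(a,n)>0$ for all $i,j$. Lemma \ref{lem: helping perron-frob} needs a positive matrix, or at least an irreducible nonnegative one. I would show that for every $i,j$ there is a finite path of positive $\mu$-probability that stays in $C(i)$, exits $\mathcal{B}_n$ for the first time at a word with last letter in $G_j$, and uses at most $n/a$ steps. By \ref{A3} each $\mu_k$ is irreducible on $G_k$ and $\alpha_k>0$, so we can append letters one at a time. The first letter is taken from some $G_k$ with $k\neq i$. We then alternate factors and end with a letter of $G_j$. Such a path advances the word length at rate at least $1$ per group-step, up to a bounded number of steps inside each factor. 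The time constraint $\le n/a$ then needs care: since $a<\beta\le K$, I would instead argue that most of the row-sum mass comes from typical paths and that a bounded modification of the last few steps changes the suffix. If strict positivity is hard to get directly, I would fall back on a version of Lemma \ref{lem: helping perron-frob} for nonnegative matrices. For any nonnegative matrix, the spectral radius is at least the minimal row sum, by the Collatz–Wielandt inequality. This gives $\nu>1$ without any positivity assumption. Survival with positive probability then follows from the standard theory of multitype Galton–Watson processes cited in \cite{MR1991122}, restricting to an irreducible class attaining the spectral radius.
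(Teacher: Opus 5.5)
Your row-sum estimate is the paper's argument in a slightly more direct form. You obtain the exact identity $\E[\#\mathcal{Z}_i(a,n)]=\sum_{k\le n/a}\rho^k\Prob(T_n=k,E_{n,i})$ by applying many-to-one generation by generation. The paper instead applies Lemma~\ref{lem: many-to-one} only at level $\lceil n/a\rceil$ and then bounds the expected number of level-$\lceil n/a\rceil$ descendants of each stopped particle by $\rho^{\lceil n/a\rceil-|u|}$. Both routes give the lower bound $\rho^{n/(a+\varepsilon)}\Prob(\frac{n}{a+\varepsilon}<T_n\le\frac{n}{a},E_{n,i})$ and conclude with Lemma~\ref{lem: LDP stoping time}. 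Your endpoint worry in proving $I(a)<\log\rho$ does not need continuity at $v_{\max}$. Take $x\in D_I$ with $x>a$ and $I(x)\le\log\rho$, and pick $x'\in(a,\min(x,\beta))$. Since $I$ is convex with minimum at $\ell$, you get $I(a)<I(x')\le I(x)$.

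The genuine difference is the step from row sums to $\nu>1$. The paper asserts that all entries of $M(a,n)$ are positive for large $n$ and then applies the positive-matrix Lemma~\ref{lem: helping perron-frob}. You should drop your attempt to prove this positivity, because it is false in general. Take $r=2$, $G_1=\Z/2\Z=\{e_1,a\}$, $G_2=\Z/3\Z=\{e_2,b,b^2\}$, $S_1=\{a\}$ and $S_2=\{b,b^2\}$. Every letter has length $1$, and every step changes $|Y_k|$ by at most $1$. Hence each $u\in\mathcal{Z}_i(a,n)$ sits at a reduced word of exactly $n$ letters, alternating between the two factors and starting outside $G_i$. So $s(X_u)=3-i$ for odd $n$ and $s(X_u)=i$ for even $n$. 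Thus $M(a,n)$ is anti-diagonal or diagonal, never positive, and no modification of the last few steps can change the suffix.

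Your fallback is therefore the correct argument and should be the main line. For any nonnegative matrix, the spectral radius is an eigenvalue and is at least the minimal row sum. This follows from Collatz--Wielandt with the all-ones vector, or from the proof of Lemma~\ref{lem: helping perron-frob} run with a nonnegative left Perron vector.

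For survival, argue via a closed communicating class $C$ reachable from the initial type, rather than a class attaining the spectral radius, which need not be reachable. No offspring leave $C$, so the row sums of $M_C$ equal those of $M$ and $\rho(M_C)>1$. If $M_C$ has period $d$ (e.g.\ $d=2$ for odd $n$ above), the $d$-step process restricted to a cyclic class is positively regular and nonsingular, and Harris \cite{MR1991122} gives survival with positive probability. With these adjustments your proposal proves the statement, including in examples like the one above where the paper's positivity step fails.
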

\begin{proof}
Since
$\ell\leq a<v_{\max}\leq K := \sup\{|x|:\ x\in \textsf{supp}(\mu)\}$,
we can choose $n$ big enough such that every entry of $M(a,n)$ becomes strictly positive. For $\varepsilon>0$ we define a slightly modified version $\mathcal{Z}'_i(a,\varepsilon,n)$ of $\mathcal{Z}_i(a,n)$, as the set
$$
\Zcal'_i(a,\varepsilon,n):=\Big\lbrace u\in\Lcal(n): \ \frac{n}{a+\varepsilon}<|u| \leq\frac{n}{a},\ \forall k\in\{1,2,\dots,|u|\}:X_{u_k}\in C(i)\Big\rbrace.
$$ 
We now show that, for sufficiently small $\varepsilon > 0$, there exists $n_0 \in \mathbb{N}$ such that for all $n \ge n_0$,
$$
\min_{1 \le i \le r} \mathbb{E}\big[\#\mathcal{Z}'_i(a,\varepsilon,n)\big] > 1.
$$
For  fixed $i\in\{1,2,\dots,r\}$, we denote  by $E_{n,i}^v$  the event that 
$X_{v_k}\in C(i)$ for all $k\le T_n^v$. Together with Lemma \ref{lem: many-to-one}, we get
\begin{equation}\label{eq:many-one-aen}
\mathbb{E}\Big[\#\Big\{v\in \GWT_{\lceil n/a \rceil} : \frac{n}{a+\varepsilon}<T_n^v\leq \frac{n}{a},\ E_{n,i}^v\Big\}\Big]=\rho^{\lceil n/a \rceil}\Prob\Big(\frac{n}{a+\varepsilon}<T_n\leq \frac{n}{a}, \ E_{n,i}\Big).
\end{equation}
We bound the left-hand side from above by counting all descendants at level $\lceil n/a \rceil$ of those particles that first exited $\mathcal{B}_n$. We have
\begin{align*}\label{eq:prop-perron1}
\mathbb{E}\Big[\#\Big\{v\in &\GWT_{\lceil n/a \rceil} : \frac{n}{a+\varepsilon}<T_n^v\leq \frac{n}{a},\  E_{n,i}^v\Big\}\Big]\\ \notag
&\leq \E\Big[\sum_{u\in\Lcal(n)}\rho^{\lceil n/a \rceil-|u|}\mathds{1}\Big\{u\in\Lcal(n):\frac{n}{a+\varepsilon}<|u|\leq \frac{n}{a},\ E_{n,i}^v\Big\}\Big]\\
&\leq \rho^{\lceil n/a \rceil-\frac{n}{a+\varepsilon}}\mathbb{E}\Big[\#\Big\{u\in\Lcal(n):\frac{n}{a+\varepsilon}<|u|\leq \frac{n}{a}, \ E_{n,i}^v\Big\}\Big]\\
&=\rho^{\lceil n/a \rceil-\frac{n}{a+\varepsilon}}\E\big[\#\mathcal{Z}'_i(a,\varepsilon,n)\big].
\end{align*}
After taking the logarithm and dividing by $n$ in \eqref{eq:many-one-aen}, and using the upper bound from the previous inequality, we obtain
$$\frac{1}{n}\log\E[\#\mathcal{Z}'_i(a,\varepsilon,n)]\geq \frac{\log\rho}{a+\varepsilon}+\frac{1}{n}\log\Prob\Big(\frac{n}{a+\varepsilon}<T_n\leq \frac{n}{a},\ E_{n,i}\Big).$$
By taking the liminf on both sides above and using Lemma \ref{lem: LDP stoping time}, we finally get
\begin{align*}
\liminf_{n\rightarrow\infty}\frac{1}{n}\log\E[\#\mathcal{Z}'_i(a,\varepsilon,n)]&\geq\frac{\log\rho}{a+\varepsilon}+\liminf_{n\rightarrow\infty}\frac{1}{n}\log\Prob\Big(\frac{n}{a+\varepsilon}<T_n\leq \frac{n}{a},\ E_{n,i}\Big)\\
&= \frac{\log\rho}{a+\varepsilon}-\frac{I(a)}{a}=\frac{1}{a}(\log\rho-I(a))-\frac{\varepsilon}{a(a+\varepsilon)}\log\rho.
\end{align*}
Since $a<v_{\max}$, we have $\log\rho-I(a)>0$, thus by choosing $\varepsilon>0$ small enough it holds
$\liminf_{n\rightarrow\infty}\frac{1}{n}\log\E[\#\mathcal{Z}'_i(a,\varepsilon,n)]>0.$
This implies that $\E[\#\mathcal{Z}'_i(a,\varepsilon,n)]$ grows at least exponentially for $\varepsilon>0$ chosen suitably small, and the growth rate can be chosen independently of $i$. So there exists $n_0\in\N$, such that
$\E[\#\mathcal{Z}'_i(a,\varepsilon,n)]>1$,
for all $n\geq n_0$ and $i\in\{1,2,\dots,r\}$. Since $\Zcal'_i(a,\varepsilon,n)\subset\Zcal_i(a,n)$, for all $n\geq n_0$ it holds
$$
\sum_{j=1}^r M_{ij}(a,n) = \E[\#\Zcal_i(a,n)]\geq \E[\#\Zcal'_i(a,\varepsilon,n)]>1,
$$ 
which together with
Lemma \ref{lem: helping perron-frob} completes the proof.
\end{proof}

\subsection{Lower bound}\label{sec:lower}
In this section we prove the lower bound for the maximal displacement of branching particles.
\begin{prop}\label{prop:lower}
Fix $a\in [\ell,v_{\max}).$ Under Assumptions \ref{A1}-\ref{A3}, we have
$$\Prob\Big(\liminf_{n\rightarrow\infty}\max_{|v|=n}\frac{|X_v|}{n}\geq a\Big)=1.$$
\end{prop}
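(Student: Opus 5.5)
By the $0$-$1$ law of Lemma \ref{lem: help 1}, it suffices to show that the event $\{\liminf_{n}\max_{|v|=n}|X_v|/n \ge a'\}$ has positive probability for every $a'<a$. Since $a\in[\ell,v_{\max})$ is arbitrary, this gives the claim: apply the argument with $a$ replaced by some $a''\in(a,v_{\max})$, or let $a'\uparrow a$ along a countable sequence. The source of positive probability is Proposition \ref{prop:perron}. Fix $n\ge n_0$, so the multitype process $(B^m(a,n))_{m\ge0}$ survives with positive probability. We then embed this process into the branching random walk and read off that surviving lineages travel at speed at least $a$.

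\textbf{Embedding.} Start from the root, at $e$, of type $i$ for an arbitrary $i$. The first generation consists of the particles in $\mathcal{Z}_i(a,n)$, each given the type $j=s(X_u)$. For a particle $u$ of type $j$, restart the walk from $X_u$ and consider the shifted process $(X_u^{-1}X_w)_{w\ge u}$. Its offspring are the particles in $\mathcal{Z}_j(a,n)$ of this subtree.

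\textbf{Key geometric point.} The increments after $u$ remain in the cone $C(j)$, so the first letter of $X_u^{-1}X_w$ is not in $G_j$. Since the last letter of $X_u$ lies in $G_j$, there is no cancellation, and $|X_w|=|X_u|+|X_u^{-1}X_w|$ along the stopped trajectory. By the strong branching property along stopping lines, the offspring counts are independent with the laws of $Z_{jk}(a,n)$. This is exactly why the cone restriction and the suffix typing were built in.

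\textbf{Linear growth along a surviving lineage.} On survival there is an infinite line of descent $u^{(1)}<u^{(2)}<\cdots$ in $\GWT$ satisfying
\[
|X_{u^{(m)}}|\ge mn \quad\text{and}\quad |u^{(m)}|\le m\lceil n/a\rceil .
\]
The second bound uses the generation bound $|u|\le n/a$, and the first uses additivity of word length. To pass from the stopping-line times to every generation $N$, pick $m$ with $|u^{(m)}|\le N<|u^{(m+1)}|$. Take any descendant $v$ of $u^{(m)}$ at level $N$ on the path to $u^{(m+1)}$; it exists since $\pi(0)=0$. The cone property, together with $|X_{u^{(m)}}^{-1}X_v|\ge0$ (or the triangle inequality with step size at most $K$), gives $|X_v|\ge mn-K\lceil n/a\rceil$. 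Since $N\le (m+1)\lceil n/a\rceil$, this yields
\[
\liminf_N \frac{\max_{|v|=N}|X_v|}{N}\ge \frac{n}{\lceil n/a\rceil}\ge \frac{n}{n/a+1}
\]
on the survival event, which has positive probability. Letting $n\to\infty$ over a countable sequence, and using Lemma \ref{lem: help 1} for each resulting threshold, gives the bound $\ge a'$ almost surely for all $a'<a$, hence $\ge a$.

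\textbf{Main obstacle.} The delicate step is the embedding: verifying rigorously that the offspring counts of different particles in the stopping line are i.i.d. copies of $Z_{j\cdot}(a,n)$, and that word lengths add. This requires the strong Markov/branching property over the stopping line $\mathcal{L}(n)$ of the shifted walks. It also requires the free-product fact that, if $s(x)=j$ and $y\in C(j)$, then $|xy|=|x|+|y|$ with no reduction. The latter is immediate from the normal form. The former I would handle by conditioning on the $\sigma$-algebra of the stopping line, as in \cite{maxdisgalton}.
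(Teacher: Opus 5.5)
Your proposal is correct and follows essentially the same route as the paper. You embed the suffix-typed multitype process of cone-restricted fast exits into the branching random walk and get survival with positive probability from Proposition~\ref{prop:perron}. You then read off linear growth along a surviving line of descent, using additivity of word length inside the cones and interpolating between consecutive stopping-line generations, and upgrade to probability one with Lemma~\ref{lem: help 1}.

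The differences are cosmetic. You start from a root of arbitrary type rather than from the generation-one particles typed by suffix. Your bound $|u^{(m)}|\le m\lceil n/a\rceil$ forces an extra limit $n\to\infty$. Integrality of generations actually gives $|u^{(m)}|\le m\lfloor n/a\rfloor\le mn/a$, hence the bound $\ge a$ directly, as in the paper.
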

\begin{figure}[t]
\centering
\scalebox{.65}{\begin{tikzpicture}[-,>=stealth,auto,node distance=3cm,
  thick,main node/.style={circle,draw,font=\sffamily\Large\bfseries}]

\draw (-5.25,-5.25) rectangle (-2.25,-2.25);
\draw[red] (0,0) -- (-2.25,-2.25);
\draw[red] (-5.25,-2.25) -- (-7.5,0);
\draw[red] (-5.25,-5.25) -- (-7.5,-7.5);
\draw[red] (-2.25,-5.25) -- (0,-7.5);

\draw (0,0) rectangle (3,3);

\draw (3.75,-0.75) rectangle (5.25,-2.25);
\draw[red] (5.25,5.25) -- (6,6);
\draw[red] (3.75,5.25) -- (3,6);
\draw[red] (3.75,3.75) -- (3,3);
\draw[red] (5.25,3.75) -- (6,3);
\draw (3.75,3.75) rectangle (5.25,5.25);
\draw[red] (5.25,-2.25) -- (6,-3);
\draw[red] (3.75,-2.25) -- (3,-3);
\draw[red] (3.75,-0.75) -- (3,0);
\draw[red] (5.25,-0.75) -- (6,0);
\draw (-0.75,3.75) rectangle (-2.25,5.25);
\draw[red] (-0.75,5.25) -- (0,6);
\draw[red] (-2.25,5.25) -- (-3,6);
\draw[red] (-2.25,3.75) -- (-3,3);
\draw[red] (-0.75,3.75) -- (0,3);

\draw [dashed, gray]
(-6.9,5.1) .. controls (0.3,0.3) .. (5.1,-6.9);
\node[red] at (-5,5) {\huge $C(1)$};
\node[black] at (-6,3) {\huge $C(2)$};
\node[red] at (-2.5,-1) {\large $G_1 = \Z/2\Z$};
\node[black] at (1.5,1.5) {\large $G_2 = \Z/4\Z$};
\node[gray] at (0.25,0.25) {$e$};
\node[gray] at (0.15,3.25) {$b$};
\node[gray] at (2.85,3.25) {$b^2$};
\node[gray] at (3.25,0.25) {$b^3$};

\node[gray] at (-0.5,4) {$ba$};
\node[gray] at (-2.6,4) {$bab$};
\node[gray] at (-2.6,5) {$bab^2$};
\node[gray] at (-0.4,5) {$bab^3$};
\node[gray] at (3.3,4.1) {$b^2a$};
\node[gray] at (3.3,5) {$b^2ab$};
\node[gray] at (5.7,5) {$b^2ab^2$};
\node[gray] at (5.7,4.1) {$b^2ab^3$};
\node[gray] at (3.3,-1){$b^3a$};
\node[gray] at (5.7,-1){$b^3ab$};
\node[gray] at (3.3,-1.9){$b^3ab^3$};
\node[gray] at (5.7,-1.9){$b^3ab^2$};

\node[gray] at (-2,-2.6){$a$};
\node[gray] at (-4.9,-2.5){$ab^3$};
\node[gray] at (-2,-5){$ab$};
\node[gray] at (-4.9,-4.9){$ab^2$};

\node[gray] at (-6.7,0){$ab^3a$};
\node[gray] at (-6.3,-7){$ab^2a$};
\node[gray] at (0,-7){$aba$};
\end{tikzpicture}}
\caption{The two cones $C(1)$ and $C(2)$ in the group $G=\Z/ 2\Z* \Z/ 4\Z$, with $\Z/2\Z = \{e,a\}$ and $\Z/4\Z=\{e,b,b^2,b^3\}$. At each element of $\Z/ 4\Z$ there is a copy of $C(2)$ attached, and at each element of $\Z/2\Z$ there is a copy of $C(1)$ attached.}\label{fig:free-prod-cones}
\end{figure}
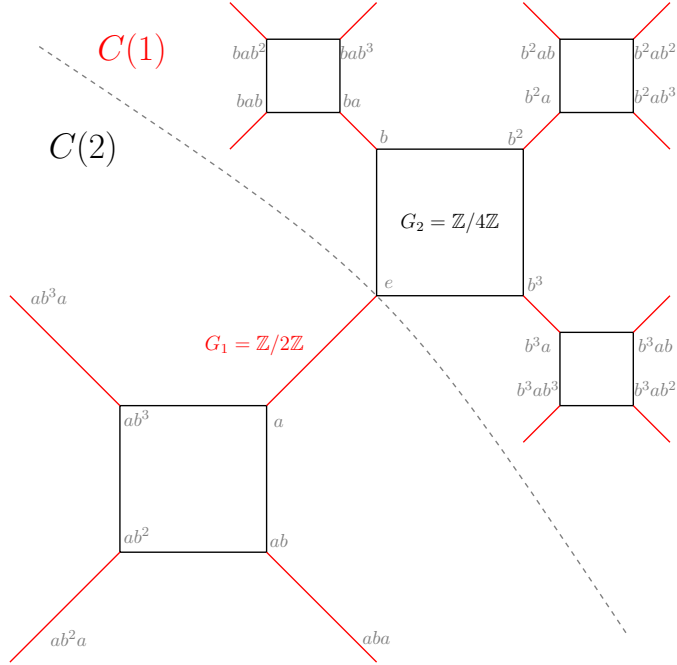
To prove Proposition \ref{prop:lower}, we relate the multitype branching process from Section \ref{sec:multi-typ} to the branching random walk $(X_v)_{v\in\GWT}$ on $G$ as follows. For $u\in\GWT$ and $v\geq u$ we define
$$T_n^{u,v}:=\inf\bigl\{k\in \{|u|,|u|+1,\dots,|v|\}:\ d(X_{v_k},X_u)\geq n\bigr\}-|u|,$$
which represents the first time at which a particle that is both a descendant of $u$ and an ancestor of $v$ exits $\mathcal{B}_n$. We subtract $|u|$ in order to reset the clock for the descendants of $u$. We also define the set
$$\Lcal_u(n) = \big\{v_{T_{n}^{u,v}+|u|}:\ v\geq u,\ T_{n}^{u,v}<\infty \big\}$$
of  descendants of $u$ that are frozen when they first exit the ball $\mathcal{B}_n$.
For every $j\in\{1,2,\dots,r\}$ and $u\in\GWT$ we set
$$
\Zcal_u(a,n):=\Big\{v\in\Lcal_u(n):\ |v|-|u|\leq\frac{n}{a},\ \forall k\in\{|u|,|u|+1,\dots,|v|\}:X_u^{-1}X_{v_k}\in C(s(X_u)) \Big\}$$
for the descendants $v$ of $u$ in $\mathcal{L}(u)$ that have not moved too far from $u$ and have remained within the cone $C(i)$, where $i$ denotes the suffix of $X_u$. We then partition this set according to the suffixes of $X_v$, for $v \in \mathcal{Z}_u(a,n)$ as 
$$\Zcal_{u,j}(a,n):=\{v\in\Zcal_u(a,n):\ s(X_v)=j\}.$$
For each $i \in \{1,2,\dots,r\}$ and $a \in [\ell, v_{\max})$, define
$$
\mathcal{N}_i^0(a,n) := \{ v \in \GWT_1 : s(X_v) = i \}.$$
We define the sets $\mathcal{N}_i^m(a,n)$  for $m\in\N$ and $i\in\{1,2,\dots,r\}$ inductively as follows. Given $\mathcal{N}_i^{m-1}(a,n)$ for every $i\in\{1,2,\dots,r\}$, we set
\begin{align*}
\mathcal{N}_i^m(a,n):=\bigcup_{j=1}^r\bigcup_{u\in\mathcal{N}_j^{m-1}(a,n)} \mathcal{Z}_{u,i}(a,n),
\end{align*}
and for the cardinality of $\mathcal{N}_i^m(a,n)$ we write $N_i^m(a,n)$. Finally,  write 
$$\mathcal{N}^m(a,n):= \bigcup_{i=1}^r \mathcal{N}_i^m(a,n)\quad
\text{and} \quad 
N^m(a,n):= \sum_{i=1}^r N_i^m(a,n).$$
The next result establishes that the sequence $\big((N_i^m(a,n))_{1\leq i\leq r} \big)_{m\geq 1}$ is a version of the multitype branching process constructed in Section \ref{sec:multi-typ}, with a suitably chosen initial state.

\begin{lemma}\label{lem: Z is N}
Assuming \ref{A3}, for any $a\in[\ell,v_{\max})$ and $n\in\N$, the sequence $\big((N_i^m(a,n))_{1\leq i\leq r} \big)_{m\geq 1}$ is a multitype branching process with initial state $
(N_i^0(a,n))_{1\leq i\leq r}$
and mean offspring matrix $M(a,n)$.
\end{lemma}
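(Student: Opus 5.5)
We prove the lemma by showing that, conditionally on everything that happened up to the construction of generation $m-1$, the offspring sets $\mathcal{Z}_u(a,n)$ attached to the particles $u\in\mathcal{N}^{m-1}(a,n)$ are independent. Moreover, each $\mathcal{Z}_u(a,n)$, split by type, has the law of $(Z_{s(X_u)j}(a,n))_{1\le j\le r}$. By the definition of a multitype Galton--Watson process this gives the claim, and the mean offspring matrix is then $M(a,n)$ by definition.

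\textbf{Step 1: the tree structure.} The particles of $\mathcal{N}^{m-1}(a,n)$ form a stopping line, i.e. no one of them is an ancestor of another. This follows by induction on $m$. Every $v\in\mathcal{Z}_u(a,n)$ satisfies $|v|>|u|$, since $T_n^{u,v}\ge1$ because $d(X_u,X_u)=0<n$. Each $\mathcal{L}_u(n)$ is itself a stopping line, as it consists of first exits along ancestral lines. For distinct $u,u'$ on a stopping line the subtrees rooted at $u$ and $u'$ are disjoint. Hence the sets $\mathcal{Z}_u(a,n)$ are disjoint for distinct parents, and $N_i^m(a,n)=\sum_j\sum_{u\in\mathcal{N}_j^{m-1}}\#\mathcal{Z}_{u,i}(a,n)$ with no double counting.

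\textbf{Step 2: the branching (strong) Markov property along the stopping line.} Let $\mathcal{G}_{m-1}$ be the $\sigma$-algebra generated by the offspring numbers and displacements of all vertices that are not strict descendants of $\mathcal{N}^{m-1}(a,n)$. Since $\mathcal{N}^{m-1}$ is an optional (stopping) line, the strong branching property for Galton--Watson trees with i.i.d.\ displacements (as in Jagers' optional line theorem) applies. It says that, conditionally on $\mathcal{G}_{m-1}$, the shifted processes $\bigl(\GWT^u,(X_u^{-1}X_{uw})_{uw\in\GWT^u}\bigr)_{u\in\mathcal{N}^{m-1}}$ are i.i.d.\ copies of $(\GWT,(X_w)_{w\in\GWT})$. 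To justify this I would check that $\{u\in\mathcal{N}^{m-1}\}$ is measurable with respect to the information on the ancestral path of $u$ and the subtrees hanging off it strictly before $u$. That holds because membership is decided by first exit times and cone conditions along ancestral lines only.

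\textbf{Step 3: identifying the law, using the group structure.} Fix $u$ with $s(X_u)=i$ and write $\tilde X_w:=X_u^{-1}X_{uw}$. By definition, $\mathcal{Z}_u(a,n)$ is the image under $w\mapsto uw$ of $\mathcal{Z}_i(a,n)$ computed for the shifted BRW $\tilde X$. The conditions $d(X_{v_k},X_u)=|\tilde X_{w_k}|$ and $X_u^{-1}X_{v_k}\in C(i)$ translate exactly. For the types we need $s(X_v)=s(X_u\tilde X_w)=s(\tilde X_w)$ whenever $\tilde X_w\in C(i)$ with $\tilde X_w\neq e$ and $s(X_u)=i$. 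This holds because the first letter of $\tilde X_w$ is not in $G_i$, so the concatenation $X_u\tilde X_w$ is already reduced and its last letter is that of $\tilde X_w$. Also $\tilde X_w\ne e$, since $|\tilde X_w|\ge n\ge1$. Therefore $(\#\mathcal{Z}_{u,j}(a,n))_j$ has, conditionally, the law of $(Z_{ij}(a,n))_j$, independently over $u$ and independent of $\mathcal{G}_{m-1}$.

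Finally, the type of a child depends only on the child's own suffix, and this is exactly the index used when it becomes a parent in the next generation. So the type-indexed counts evolve as a multitype Galton--Watson process started from $(N_i^0(a,n))_i$, with offspring laws $Z_{i\cdot}(a,n)$ and mean matrix $M(a,n)$. I expect the main obstacle to be Step 2: making the strong branching property along the random stopping lines $\mathcal{N}^{m-1}$ rigorous. The algebraic identification of types in Step 3 is routine given the cone condition.
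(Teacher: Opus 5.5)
Your proposal is correct and follows the paper's proof. The paper likewise notes that $T_n^{u,v}$ and $\{w: uw\in\mathcal{Z}_{u,j}(a,n)\}$ are built from the shifted process $(X_u^{-1}X_{uw})$ exactly as $T_n^w$ and $\mathcal{Z}_{ij}(a,n)$ are built from $(X_w)$, and then invokes Jagers' strong Markov branching property along the optional lines $\mathcal{N}^m(a,n)$. Your disjointness check and the explicit suffix identity $s(X_u\tilde X_w)=s(\tilde X_w)$, which follows from the cone condition, are details the paper leaves implicit.

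One small fix is needed. Your $\mathcal{G}_{m-1}$ should contain the displacements (hence the positions) of the line vertices $u\in\mathcal{N}^{m-1}(a,n)$, but not their offspring numbers. The number of children of $u$ belongs to $\GWT^u$, so conditioning on it would break the claimed i.i.d.\ structure of the shifted subtrees.
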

\begin{proof}
Let $\GWT^u$ be the subtree of $\GWT$ rooted at $u$. Denote by $|\cdot|_u$ the distance to this root within $\GWT^u$, i.e., $|\cdot|_u = |\cdot| - |u|$. For $0 \le k \le |v|_u$, define $v_k^u := v_{|u|+k}$, the ancestor of $v$ at generation $k$ in $\GWT^u$. For $v=uw$ we have
	\begin{align*}
		T_n^{v,u} &= \inf\{|u|\le k\le |v|:\ d(X_{v_k},X_u)\ge n\}-|u|\\
		&= \inf\{0\le k\le |v|-|u|:\ |X_u^{-1}X_{v_{|u|+k}}|\ge n\}\\
		&= \inf\{0\le k\le |w|:\ |X^u_{uw_k}|\ge n\}.
	\end{align*}
Thus $T_n^{v,u}$ depends only on $(X_u^{-1}X_{u'})_{u'\in\GWT^u}$ and is defined from these variables in the same way as $T_n^w$ is defined from $(X_{u'})_{u'\in\GWT}$. Similarly, if $s(X_u)=i$, then the set $\{w: uw\in\mathcal{Z}_{u,j}(a,n)\}$ depends only on $(X_u^{-1}X_{u'})_{u'\in\GWT^u}$ and is defined in the same way as $\mathcal{Z}_{ij}(a,n)$ from $(X_{u'})_{u'\in\GWT}$.

For any $m$, the set $\mathcal{N}_i^m(a,n)$ is an optional stopping line in the sense of Jagers \cite{Jagers}. The notation $v\le \mathcal{N}_i^m(a,n)$ means that $v$ lies on the geodesic between the root $\emptyset$ and some element of $\mathcal{N}_i^m(a,n)$.  Conditioned on $(X_v)_{v\le \mathcal{N}_i^m(a,n)}$, the processes
	\[
	(X_v^u)_{v\in\GWT^u} := (X_u^{-1}X_v)_{v\in\GWT^u}, \qquad u\in \mathcal{N}_i^m(a,n),
	\]
	are, by the strong Markov branching  property \cite[Theorem 4.14]{Jagers}, independent and have the same distribution as $(X_v)_{v\in\GWT}$. Consequently, conditioned on $(X_v)_{v\le \mathcal{N}_i^m(a,n)}$, the evolutions of $\mathcal{N}_i^{m'}(a,n)\cap\GWT^u$ for different $u\in \mathcal{N}_i^m(a,n)$ are independent and their law depends only on $s(X_u)$. Hence $\big((N_i^m(a,n))_{1\le i\le r}\big)_{m\ge1}$ is a multitype Galton--Watson process.
\end{proof}


Consequently, for $n$ sufficiently large, the sequence $\mathcal{N}^m(a,n)$ remains nonempty with positive probability. Furthermore, it consists only of particles lying outside $\mathcal{B}_{n m}$ that have moved at a prescribed speed, as established in the next result.

\begin{lemma}\label{lem : N dist and speed}
For every $m\geq 0$ and $v\in\mathcal{N}^m(a,n)$ it holds that
$|v| \leq \frac{nm}{a}+1$ and $nm\leq |X_v|$.
\end{lemma}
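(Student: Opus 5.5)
We argue by induction on $m$, proving both inequalities at once. For $m=0$, every $v\in\mathcal{N}^0(a,n)$ lies in $\GWT_1$, so $|v|=1\le 0\cdot n/a+1$. Also $|X_v|\ge 0$ holds trivially; in fact $X_v\neq e$, since $s(X_v)$ is defined only for nonempty words.

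For the induction step, take $v\in\mathcal{N}^{m}(a,n)$ with $m\ge1$. By definition there are $j$ and $u\in\mathcal{N}^{m-1}_j(a,n)$ with $v\in\mathcal{Z}_{u,i}(a,n)$ for some $i$, so $v\in\mathcal{Z}_u(a,n)$. The generation bound follows from the defining constraint $|v|-|u|\le n/a$ together with the induction hypothesis $|u|\le n(m-1)/a+1$, which give $|v|\le nm/a+1$.

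The distance bound needs the cone geometry. Write $i=s(X_u)$ and $w=X_u^{-1}X_v$. Since $v\in\mathcal{L}_u(n)$, the first exit time $T_n^{u,v}$ is attained at $v$, so $|w|\ge n$. Since $v\in\mathcal{Z}_u(a,n)$, we have $w\in C(i)$, so the first letter $w_1$ is not in $G_i$. The last letter of the reduced word $X_u$ lies in $G_i$ and $w_1\notin G_i$, so the concatenation $X_u w$ is already reduced. Hence $|X_v|=|X_uw|=|X_u|+|w|\ge n(m-1)+n=nm$.

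The main obstacle is justifying that word length is additive for a reduced concatenation. Here $|\cdot|$ is the Cayley-graph distance for $S=\bigcup_k S_k$, which equals $\sum_k |x_k|_{S_{g(k)}}$ over the letters $x_k$ of the reduced word, where $g(k)$ is the index of the factor containing $x_k$. The reason is that $\mathrm{Cay}(G,S)$ is a tree of copies of the factor Cayley graphs, so any path from $e$ to $x$ must pass through the cut vertices $x_1\cdots x_k$. From this, $|X_uw|=|X_u|+|w|$ whenever the letters do not merge, and it holds in particular at the junction $G_i$ versus $G_{s(w_1)}\neq G_i$. I would state this as a short remark before the induction.

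A minor point is the case $X_u=e$, where the suffix is undefined. This cannot occur for $u\in\mathcal{N}^{m-1}(a,n)$, because for $m-1\ge1$ we have $|X_u|\ge n(m-1)>0$, and for $m-1=0$ the suffix $s(X_u)$ is defined by the construction of $\mathcal{N}^0$.
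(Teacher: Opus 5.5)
Your induction is correct and is the natural unpacking of the paper's one-line justification (``follows directly from the definition''). The generation bound comes from $|v|-|u|\le n/a$. The displacement bound comes from the cone condition $X_u^{-1}X_v\in C(s(X_u))$, which makes $X_u\cdot(X_u^{-1}X_v)$ a reduced concatenation. You also supply the one nontrivial ingredient the paper leaves implicit, the additivity $|X_uw|=|X_u|+|w|$ coming from the tree-of-blocks structure of $\mathrm{Cay}(G,S)$, and your argument for it is sound.
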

\begin{proof}
Follows directly from the definition of the sets $\mathcal{N}^m(a,n)$.
\end{proof}

We are now in position to prove Proposition \ref{prop:lower}. The multitype branching process is defined by considering only those particles for which the stopping time of moving far is sufficiently small. We show below that a small stopping time is equivalent to the maximal displacement of the branching random walk being large.

\begin{proof}[Proof of Proposition \ref{prop:lower}]
Fix $a \in [\ell, v_{\max})$. Choose $n$ sufficiently large so that $\mathcal{N}^m(a,n)$ stays nonempty with positive probability. On this event, there exists a sequence $(v_m)_{m \ge 1}$ with $v_m \in \mathcal{N}^m(a,n)$ and $v_{m-1}$ an ancestor of $v_m$.
For each $l$, let $u_l \in \GWT$ be the unique generation-$l$ vertex lying on the ray $(v_m)_{m \ge 1}$. If $u_l$ lies on the segment $[v_m,v_{m+1}]$, then by Lemma~\ref{lem : N dist and speed}
	\begin{align*}
		l \le |v_{m+1}|\le \frac{n(m+1)}{a}+1
		\qquad\text{and}\qquad
		|X_{u_l}|\ge |X_{v_m}|-nK\ge mn-nK.
	\end{align*}
	Combining these inequalities we obtain
$|X_{u_l}|\ge al-(K+1)n-a$, and thus
	\begin{align*}
		\liminf_{l\to\infty}\max_{|u|=l}\frac{|X_u|}{l}
		\ge 
		\liminf_{l\to\infty}\frac{|X_{u_l}|}{l}
		\ge a.
\end{align*}
We have thus shown
$$\Prob\Big(\liminf_{l\rightarrow\infty}\max_{|u|=l}\frac{|X_u|}{l}\geq a\Big)>0,$$
and the claim follows from the $0$-$1$ law from Lemma \ref{lem: help 1}.
\end{proof}

\begin{proof}[Proof of Theorem \ref{thm:main}]
From Proposition \ref{prop:upper}, because $a>v_{\max}$ was arbitrary, we get
\begin{equation}\label{eq:max-upbnd}
\Prob\Big(\limsup_{n\rightarrow\infty}\max_{|v|=n}\frac{|X_v|}{n}\leq v_{\max}\Big)=1.
\end{equation}
Similarly, from Proposition \ref{prop:lower}, since $a\in [l,v_{\max})$ was arbitrary, we also get 
\begin{equation}\label{eq:max-lobnd}
\Prob\Big(\liminf_{n\rightarrow\infty}\max_{|v|=n}\frac{|X_v|}{n}\geq v_{\max}\Big)=1.
\end{equation}
Equations \eqref{eq:max-upbnd} and \eqref{eq:max-lobnd} together prove the result.
\end{proof}

\section*{Minimal displacement}\label{sec: min dis}

In this section we prove Theorem \ref{thm:main-min}. The lower bound is straightforward and similar to the argument used for the upper bound in Theorem \ref{thm:main}. Recall the definition of $v_{\min} \ge 0$ from Equation \eqref{eq:v-min}.
\begin{prop}
Assume $v_{\min} > 0$ and fix $a \in [0, v_{\min})$. Under Assumptions \ref{A1}–\ref{A3}, we have
$$
\mathbb{P}\Big(\liminf_{n \to \infty} \min_{|v|=n} \frac{|X_v|}{n} \ge a\Big) = 1.
$$
\end{prop}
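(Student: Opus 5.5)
We mirror the argument of Proposition \ref{prop:upper}, with the roles of the two tails exchanged. Fix $a\in[0,v_{\min})$. For $n\in\N$ we count the generation-$n$ particles that are too close to the origin:
$$
\widetilde N_{a,n}:=\sum_{|v|=n}\mathds{1}\{|X_v|\le na\}.
$$
The event $\{\min_{|v|=n}|X_v|< na\}$ is contained in $\{\widetilde N_{a,n}\ge 1\}$. Hence it suffices to show that $\sum_n \Prob(\widetilde N_{a,n}\ge1)<\infty$, and then conclude with Borel--Cantelli.

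\textbf{First moment.} By Lemma \ref{lem: many-to-one}, $\E[\widetilde N_{a,n}]=\rho^n\Prob(|Y_n|\le na)$. The set $A=[0,a]$ is closed, so the large deviation upper bound gives
$$
\limsup_{n\to\infty}\frac1n\log\Prob(|Y_n|\le na)\le -\inf_{x\in[0,a]}I(x).
$$

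\textbf{Key step: the rate function on $[0,a]$.} We need $\inf_{x\in[0,a]}I(x)>\log\rho$. Note that $v_{\min}\le \ell$, since $I(\ell)=0<\log\rho$ by \ref{I2}. So $[0,a]\subset D_I$, on which $I$ is continuous by \ref{I1}. By the definition of $v_{\min}$ as an infimum, every $x\in[0,a]$ satisfies $I(x)>\log\rho$. Since $[0,a]$ is compact and $I$ is continuous there, the infimum is attained, and therefore it is strictly greater than $\log\rho$. This is the only point needing care. One must not rely on the (possibly false) monotonicity of $I$ on $[0,\ell]$; continuity together with compactness is enough. Convexity would also give that $I$ is nonincreasing on $[0,\ell]$, since $I(\ell)=0$ is the minimum, but it is not needed.

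\textbf{Conclusion.} Put $c:=\tfrac12\bigl(\inf_{[0,a]}I-\log\rho\bigr)>0$. For all large $n$ we get $\E[\widetilde N_{a,n}]\le e^{-cn}$. By Markov's inequality,
$$
\sum_n\Prob(\widetilde N_{a,n}\ge1)\le\sum_n\E[\widetilde N_{a,n}]<\infty.
$$
Borel--Cantelli then shows that almost surely $\min_{|v|=n}|X_v|>na$ for all but finitely many $n$. This gives $\liminf_{n}\min_{|v|=n}|X_v|/n\ge a$ almost surely, which is the claim. The case $a=0$ is trivial and is included in the argument anyway.
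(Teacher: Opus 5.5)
Your proposal is correct and follows the paper's proof essentially verbatim: you count the generation-$n$ particles with $|X_v|\le na$, apply the many-to-one formula and the LDP upper bound on the closed set $[0,a]$, and conclude with Markov's inequality and Borel--Cantelli. The only addition is your explicit justification that $\inf_{x\in[0,a]}I(x)>\log\rho$, via the definition of $v_{\min}$ and continuity of $I$ on the compact set $[0,a]\subset D_I$; the paper simply asserts this step.
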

\begin{proof}
The proof is similar to the one of Proposition \ref{prop:upper}. For $n\in\N$, define the random variable
$$\widehat{N}_{n,a} := \sum_{v\in\GWT_n}\mathds1 \{|X_v|\leq na\},$$
which counts the number of particles in generation $n$ with displacement at most $na$. Lemma \ref{lem: many-to-one} gives
$\E[\widehat{N}_{n,a}] = \rho^n\Prob(|Y_n|\leq na)$.
Since $a<v_{\min}$ it holds that $\inf_{x\leq a}I(a)>\log\rho$, from which it follows 
$$\limsup_{n\to\infty}\frac{1}{n}\log\E[\widehat{N}_{n,a}]<0.$$
Consequently, there exist constants $c, C > 0$ with $\mathbb{E}[\widehat{N}_{n,a}] \le C e^{-c n}$. By Markov’s inequality,
$$
\sum_{n=1}^\infty \mathbb{P}(\widehat{N}_{n,a} \ge 1)
\le \sum_{n=1}^\infty \mathbb{E}[\widehat{N}_{n,a}]
\le C \sum_{n=1}^\infty e^{-c n}
< \infty.
$$
Applying the Borel–Cantelli lemma finishes the proof.
\end{proof}
The upper bound is considerably simpler than the lower bound for the maximal diplacement in Proposition \ref{prop:lower}. Indeed, for any $a > v_{\min}$, if $v \in \mathbb{T}_n$ and $w \in \mathbb{T}_{n+m}$ with $v < w$, and if $|X_v| \le n a$ and $d(X_v, X_w) \le m a$, then by the triangle inequality,
$$|X_w| \le |X_v| + d(X_v, X_w) \le n a + m a \le (n + m) a.$$
In contrast, the conditions $|X_v| \ge n a$ and $d(X_v, X_w) \ge m a$ do not imply $|X_w| \ge (n + m) a$.
\begin{prop}
Fix $a > v_{\min}$. Under Assumptions \ref{A1}–\ref{A3}, we have
$$
\mathbb{P}\Big(\limsup_{n \to \infty} \min_{|v|=n} \frac{|X_v|}{n} \le a\Big) = 1.
$$
\end{prop}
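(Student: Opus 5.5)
We prove the upper bound $\limsup_{n}\min_{|v|=n}|X_v|/n\le a$ for $a>v_{\min}$ by embedding a supercritical single-type Galton--Watson process of ``slow'' particles and using subadditivity of the triangle inequality, as sketched before the statement. If $v_{\min}\ge\ell$ there is nothing to prove beyond the regime $a>\ell$, where the argument below works equally well. We may also assume $a$ lies in $(v_{\min},\ell]$, because the claim for a smaller $a$ implies it for a larger one. Pick $a'\in(v_{\min},a)$ with $I(a')<\log\rho$. Such an $a'$ exists by the definition of $v_{\min}$ and the convexity and continuity of $I$ on $D_I$ (Lemma \ref{lem:rate-beh}).

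\textbf{Step 1 (block process).} Fix a large integer $m$. Let $W_m$ be the number of generation-$m$ particles $v$ with $|X_v|\le a'm$. By Lemma \ref{lem: many-to-one} and the large deviation lower bound applied to the open set $(a'-\delta,a')$ (or $[0,a')$ when $a'\le \ell$), we get
$$\E[W_m]=\rho^m\Prob(|Y_m|\le a'm)\ge \exp\bigl(m(\log\rho-I(a')-\delta)\bigr),$$
which exceeds $1$ for $m$ large and $\delta$ small. Now define generations $\mathcal{S}^0=\{\emptyset\}$ and
$$\mathcal{S}^{k+1}=\{w\in\GWT_{(k+1)m}:\ w \text{ descends from some } u\in\mathcal{S}^k,\ d(X_u,X_w)\le a'm\}.$$
By the branching property and the translation invariance of the increments $X_u^{-1}X_w$, the sizes $(\#\mathcal{S}^k)_k$ form a Galton--Watson process with offspring law equal to that of $W_m$. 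This process is supercritical, so it survives with positive probability $q>0$. The triangle inequality gives $|X_w|\le a'km$ for every $w\in\mathcal{S}^k$.

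\textbf{Step 2 (interpolation).} For $n=km+j$ with $0\le j<m$, take a descendant at generation $n$ of some $w\in\mathcal{S}^k$; one exists since $\pi(0)=0$. Its norm is at most $a'km+Km\le a'n+Km$. On survival we therefore get $\limsup_n \min_{|v|=n}|X_v|/n\le a'<a$. So this event has probability at least $q>0$.

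\textbf{Step 3 (0-1 law).} The event $\{\limsup_n\min_{|v|=n}|X_v|/n\le a\}$ is handled as in Lemma \ref{lem: help 1}. Changing the starting point changes the quantity only by $|x|/n$ (as in Lemma \ref{lem: 0-1-liminf}). The complement at level $k$ is then a product over $u\in\GWT_k$ of i.i.d.\ events, so its probability is $\E[p^{|\GWT_k|}]$, where $p$ is the complement probability. Since $|\GWT_k|\to\infty$ almost surely, this forces $p\in\{0,1\}$, and Step 2 rules out $p=1$. Hence the probability is $1$.

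The main obstacle is Step 1's large deviation input when $a'$ is near $0$, i.e.\ $v_{\min}=0$. The lower bound there must come from the LDP on an open set inside $D_I$; this is fine since $D_I\ni0$ and $I$ is continuous there. Apart from that, the argument is routine.
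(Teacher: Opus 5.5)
Your proposal is correct and follows essentially the same route as the paper: a single-type Galton--Watson process on blocks of length $m$, where each particle keeps its descendants that moved at most $a'm$ from it; supercriticality via the many-to-one formula and the LDP lower bound; the triangle inequality plus bounded jumps to interpolate between blocks; and the $0$-$1$ law adapted from Lemma \ref{lem: help 1}. One small correction in Step 3: in general only $\limsup_n \min_{|v|=n}|X_v|/n \le \min_{|u|=k}\limsup_n \min_{v\in\GWT^u_n}|X_v|/n$ holds, so you get $p\le \E\bigl[p^{|\GWT_k|}\bigr]$ rather than equality, but this inequality is exactly what you need to conclude $p=0$ from $p<1$.
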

\begin{proof}
Fix $a > v_{\min}$ and, for $n \in \mathbb{N}$, set
$$
\widehat{\mathcal{Z}}_1(a,n) := \{ v \in \mathbb{T}_n : |X_v| \le n a \},
$$
and for $u \in \mathbb{T}$ define
$$
\widehat{\mathcal{Z}}_u(a,n) := \{ v \in \mathbb{T}_{|u| + n} : v > u,\ d(X_v, X_u) \le n a \}.
$$
Define the families $\widehat{\mathcal{Z}}_m(a,n)$ inductively: given $\widehat{\mathcal{Z}}_m(a,n)$ for $m \in \mathbb{N}$, set
$$
\widehat{\mathcal{Z}}_{m+1}(a,n) = \bigcup_{u \in \widehat{\mathcal{Z}}_m(a,n)} \widehat{\mathcal{Z}}_u(a,n).
$$
Then the sequence of cardinalities $(\widehat{Z}_m(a,n))_{m \ge 0} := (\#\widehat{\mathcal{Z}}_m(a,n))_{m \ge 0}$ forms a Galton–Watson process whose offspring distribution is the law of $Z_1(a,n)$. By Lemma \ref{lem: many-to-one},
$$
\mathbb{E}[\widehat{Z}_1(a,n)] = \rho^n \,\mathbb{P}(|Y_n| \le n).
$$
Since $a>v_{\min}$, it holds $\inf_{x<a}I(x)>\log\rho$, and so 
$$\liminf_{n\to\infty}\frac{1}{n}\log\E[\widehat{Z}_1(a,n)]>0.$$
Therefore, for sufficiently large $n \in \mathbb{N}$ we have $\mathbb{E}[Z_1(a,n)] > 1$, so $(\widehat{Z}_m(a,n))_{m \ge 0}$ is supercritical. Hence, for such $n$, the event
$\widehat{\mathcal{Z}}_m(a,n) \neq \varnothing$ for all $m \in \mathbb{N}$
has positive probability. Condition on this event and pick $u \in \widehat{\mathcal{Z}}_m(a,n)$. By the definition of the branching process $(\widehat{Z}_m(a,n))_{m \ge 0}$, we have $|u| = n m$, and for every $0 \le j < m$, we have $d(X_{u_{n j}}, X_{u_{n(j+1)}}) \le n a$.
Applying the triangle inequality,
$$
|X_u|
\le |X_{u_n}| + d(X_{u_n}, X_{u_{2n}}) + \cdots + d(X_{u_{(m-1)n}}, X_{u_{mn}})
\le m n a.
$$
Consequently,
$\min_{|v| = n m} |X_v| \le |X_u| \le n m a $.
Since the walk can jump only a bounded distance $K$ at each step we obtain that
$\limsup_{n\to\infty}\min_{|v|=n}\frac{|X_v|}{n}\leq a$
with positive probability. The $0$-$1$ law for the liminf of the maximal displacement (Theorem \ref{lem: 0-1-liminf}) can be adapted directly to establish a similar $0$-$1$ law for the limsup of the minimal displacement. This completes the proof.
\end{proof}

\textbf{Question.} Our main results, Theorems \ref{thm:main} and \ref{thm:main-min}, identify the leading linear behavior of the maximal and minimal displacements for branching random walks on free products of finite groups. A natural next question concerns the second-order term: what can be said about the fluctuations of $\max_{|u|=n} |X_u| - n v_{\max}$ and $\min_{|u|=n} |X_u| - n v_{\min}$?

\textbf{Acknowledgements.} We are very grateful to the two anonymous referees for a careful reading of a previous version of the paper, and for the many valuable comments and suggestions, which substantially improved the paper.

\textbf{Funding information.} The research of E. Sava-Huss was funded in part by the Austrian Science Fund (FWF) [10.55776/PAT3123425]. For open access purposes, the authors have applied a CC BY public copyright license to any author-accepted manuscript version arising from this submission.

\bibliographystyle{amsalpha}
\bibliography{lit}

\providecommand{\bysame}{\leavevmode\hbox to3em{\hrulefill}\thinspace}
\providecommand{\MR}{\relax\ifhmode\unskip\space\fi MR }
\providecommand{\MRhref}[2]{%
  \href{http://www.ams.org/mathscinet-getitem?mr=#1}{#2}
}
\providecommand{\href}[2]{#2}
\begin{thebibliography}{KKSH26}

\bibitem[A\"13]{maxconv}
E.~A\"id\'ekon, \emph{Convergence in law of the minimum of a branching random walk}, Ann. Probab. \textbf{41} (2013), no.~3A, 1362--1426.

\bibitem[ABR09]{logcorr2}
L.~Addario-Berry and B.~Reed, \emph{Minima in branching random walks}, Ann. Probab. \textbf{37} (2009), no.~3, 1044--1079.

\bibitem[BCZ24]{boundaryconv}
D.~Bertacchi, E.~Candellero, and F.~Zucca, \emph{Martin boundaries and asymptotic behavior of branching random walks}, Electron. J. Probab. \textbf{29} (2024), Paper No. 138, 28.

\bibitem[BGGS25]{maxdisgalton}
J.~Berestycki, N.~Gantert, D.~Geldbach, and Q.~Shi, \emph{Biased branching random walks on {B}ienaym\'e--{G}alton--{W}atson trees}, 2025.

\bibitem[Big77]{addmart2}
J.~D. Biggins, \emph{Martingale convergence in the branching random walk}, J. Appl. Probability \textbf{14} (1977), no.~1, 25--37.

\bibitem[BK04]{derivmart}
J.~D. Biggins and A.~E. Kyprianou, \emph{Measure change in multitype branching}, Adv. in Appl. Probab. \textbf{36} (2004), no.~2, 544--581.

\bibitem[BP94]{tree-index}
I.~Benjamini and Y.~Peres, \emph{Tree-indexed random walks on groups and first passage percolation}, Probab. Theory Related Fields \textbf{98} (1994), no.~1, 91--112.

\bibitem[CGM12]{BRWOnFreeProducts}
E.~Candellero, L.~Gilch, and S.~M\"uller, \emph{Branching random walks on free products of groups}, Proc. Lond. Math. Soc. (3) \textbf{104} (2012), no.~6, 1085--1120.

\bibitem[Che15]{maxbrown}
X.~Chen, \emph{Scaling limit of the path leading to the leftmost particle in a branching random walk}, Theory Probab. Appl. \textbf{59} (2015), no.~4, 567--589.

\bibitem[Cor21]{largedev}
E.~Corso, \emph{Large deviations for random walks on free products of finitely generated groups}, Electron. J. Probab. \textbf{26} (2021), Paper No. 134, 22.

\bibitem[CS86]{RWOnFreeProducts}
D.~Cartwright and P.~M. Soardi, \emph{Random walks on free products, quotients and amalgams}, Nagoya Math. J. \textbf{102} (1986), 163--180.

\bibitem[DZ98]{largedev_dembo}
A.~Dembo and O.~Zeitouni, \emph{Large deviations techniques and applications}, second ed., Applications of Mathematics (New York), vol.~38, Springer-Verlag, New York, 1998.

\bibitem[Gil07]{RateOfEscapeFreeProduct}
L.~Gilch, \emph{Rate of escape of random walks on free products}, J. Aust. Math. Soc. \textbf{83} (2007), no.~1, 31--54.

\bibitem[Gil11]{AssymptoticEntropyFreeProduct}
\bysame, \emph{Asymptotic entropy of random walks on free products}, Electron. J. Probab. \textbf{16} (2011), no. 3, 76--105.

\bibitem[GW86]{LLNRWFreeProduct}
P.~Gerl and W.~Woess, \emph{Local limits and harmonic functions for nonisotropic random walks on free groups}, Probab. Theory Relat. Fields \textbf{71} (1986), no.~3, 341--355.

\bibitem[Ham74]{maxdisreal1}
J.~M. Hammersley, \emph{Postulates for subadditive processes}, Ann. Probability \textbf{2} (1974), 652--680.

\bibitem[Har02]{MR1991122}
T.~E. Harris, \emph{The theory of branching processes}, Dover Phoenix Editions, Dover Publications, Inc., Mineola, NY, 2002.

\bibitem[HR17]{manytofew}
S.~Harris and M.~Roberts, \emph{The many-to-few lemma and multiple spines}, Ann. Inst. Henri Poincar\'e{} Probab. Stat. \textbf{53} (2017), no.~1, 226--242.

\bibitem[HS09]{logcorr1}
Y.~Hu and Z.~Shi, \emph{Minimal position and critical martingale convergence in branching random walks, and directed polymers on disordered trees}, Ann. Probab. \textbf{37} (2009), no.~2, 742--789.

\bibitem[Jag89]{Jagers}
P.~Jagers, \emph{General branching processes as markov fields}, Stoch. Process. Appl. \textbf{32} (1989), no.~2, 183--212.

\bibitem[Kin75]{maxdisreal2}
J.~F.~C. Kingman, \emph{The first birth problem for an age-dependent branching process}, Ann. Probability \textbf{3} (1975), no.~5, 790--801.

\bibitem[KKSH26]{limit2}
R.~Kaiser, M.~Klötzer, and E.~Sava-Huss, \emph{Limit theorems for the empirical distribution of supercritical branching random walks on transitive graphs}, Electron. J. Probab. \textbf{31} (2026), 1--29.

\bibitem[KW23]{limit1}
V.~Kaimanovich and W.~Woess, \emph{Limit distributions of branching {M}arkov chains}, Ann. Inst. Henri Poincar\'e{} Probab. Stat. \textbf{59} (2023), no.~4, 1951--1983.

\bibitem[Lal93]{FiniteRangeFreeProduct}
S.~Lalley, \emph{Finite range random walk on free groups and homogeneous trees}, Ann. Probab. \textbf{21} (1993), no.~4, 2087--2130.

\bibitem[LMW24]{Multifractal}
S.~Lai, H.~Ma, and L.~Wang, \emph{Multifractal spectrum of branching random walks on free groups}, 2024, 10.48550/arXiv.2409.01346.

\bibitem[Nev88]{addmart1}
J.~Neveu, \emph{Multiplicative martingales for spatial branching processes}, Progr. Probab. Statist., vol.~15, Birkh\"auser Boston, Boston, MA, 1988, pp.~223--242.

\bibitem[Shi15]{BRW}
Z.~Shi, \emph{Branching random walks}, Lecture Notes in Mathematics, vol. 2151, Springer, Cham, 2015.

\bibitem[Sta66]{stam}
A.~J. Stam, \emph{On a conjecture by {H}arris}, Z. Wahrscheinlichkeitstheorie und Verw. Gebiete \textbf{5} (1966), 202--206.

\bibitem[SWX23]{MR4630601}
V.~Sidoravicius, L.~Wang, and K.~Xiang, \emph{Limit set of branching random walks on hyperbolic groups}, Comm. Pure Appl. Math. \textbf{76} (2023), no.~10, 2765--2803.

\end{thebibliography}

\texttt{Robin Kaiser}, Technische Universität München, Germany. \texttt{ro.kaiser@tum.de}\\
\texttt{Martin Klötzer}, Universität Innsbruck, Innsbruck, Austria. \texttt{Martin.Kloetzer@uibk.ac.at}\\
\texttt{Konrad Kolesko}, Wroclaw University of Science and Technology, Poland. \texttt{konrad.kolesko@pwr.edu.pl}\\
\texttt{Ecaterina Sava-Huss}, Universität Innsbruck, 
Innsbruck, Austria. \texttt{Ecaterina.Sava-Huss@uibk.ac.at}

\end{document}